\allowdisplaybreaks \setlength{\topmargin}{-.5cm}
\newtheorem{theorem}{Theorem}[section]
\newtheorem{lemma}[theorem]{Lemma}
\newtheorem{proposition}[theorem]{Proposition}
\newtheorem{corollary}[theorem]{Corollary}
\theoremstyle{definition}
\newtheorem{notation}[theorem]{Notation}
\newtheorem{example}[theorem]{Example}
\newtheorem{remark}[theorem]{Remark}
\newtheorem{note}[theorem]{Note}
\numberwithin{equation}{section}
\begin{document}

\title[Approximately Dual frames under perturbations]{
The effect of perturbations of frames on their alternate and approximately dual frames}

\author[M. Hajiabootorabi, H. Javanshiri and M. R. Mardanbeigi]{M. Hajiabootorabi$^{1}$, H. Javanshiri$^{2}$ and M. R. Mardanbeigi$^{3}$}

\address{$^{2}$Department of Mathematics, Yazd University,
P.O. Box: 89195-741, Yazd, Iran }
\email{\textcolor[rgb]{0.00,0.00,0.84}{h.javanshiri@yazd.ac.ir;}}
\address{$^{1,3}$ Department of Mathematics, Science and Research Branch, Islamic Azad University, Tehran, Iran}
\email{\textcolor[rgb]{0.00,0.00,0.84}{mhajiabootorabi@yahoo.com;}}
\email{\textcolor[rgb]{0.00,0.00,0.84}{mrmardanbeigi@srbiau.ac.ir.}}

\subjclass[2010]{Primary 46C50, 65F15, 42C15; Secondary 41A58, 47A58.}

\keywords{Bessel sequence, dual frame, perturbation, Gabor frame.}


\begin{abstract}
Approximately dual frames as a generalization of duality notion in Hilbert spaces have applications in Gabor systems, wavelets, coorbit theory
and sensor modeling. In recent years, the computing of 
the associated deviations of the canonical and alternate dual frames from the original
ones has been considered by some authors. However, the quantitative measurement of the associated deviations of the alternate and approximately dual frames from the original
ones has not been satisfactorily answered. In this paper, among other things, it is proved that if the sequence $\Psi=(\psi_n)_n$
is sufficiently close to
the frame $\Phi=(\varphi_n)_n$, then $\Psi$ is a frame for $\mathcal H$
and approximately dual frames
$\Phi^{ad}=(\varphi^{ad}_n)_n$ and $\Psi^{ad}=(\psi^{ad}_n)_n$ can be found
which are close to each other and particularly, we
estimate the deviation from perfect reconstruction
in terms of the operator ${\mathcal A}_1:=T_\Phi
U_{\Phi^{ad}}$ and ${\mathcal A}_2:=T_\Psi U_{\Psi^{ad}}$ and their approximation rates, where $T_X$ and $U_X$
denote the synthesis
and analysis operators of the frame $X$, respectively. Finally, we
demonstrate how our results apply in the practical case of Gabor systems.
It is worth mentioning that some of our perturbation conditions are quite different
from those used in the previous literatures on this topic.
\end{abstract} 
\maketitle


\section{Introduction}

Frames were first introduced by Duffin and Schaeffer \cite{DSCH}. Today they are wide application throughout mathematics and engineering.
This is because of, a frame provides robust, unconditionally convergent, basis-like but usually non-unique expansions of elements of underlying Hilbert space.
Specifically, reconstruction of the original vector from frame is typically achieved by using a so-called duality
notion. A number of variations and generalizations of duality notion can be found in \cite{li1,li2} in the more general context of pseudo-duality, atomic system for subspace \cite{ATF}, approximate duality \cite{app,japp} and generalized duality \cite{app2,japp} .

In many situations,
it is important to know which properties of frames are
stable if we slightly modify the elements of the systems. This gives rise to the
so-called perturbation theory.
In detail, due to the fundamental works done by Paley and Wiener \cite{PWP}, the idea of a specific
perturbation of the typical exponential orthonormal basis of $L^2[-a,a]$
were formally introduced and popularized from then on. Various generalizations of Paley and Wiener perturbation Theorem have been appeared in the literature. For example, in \cite{BADA}, it was studied how perturbations of a frame sequence affect the
canonical dual. A similar approach was made in \cite{aref2} for G-frames.
Later on, the question of stability of duals with respect to perturbations in a much more general setting has been considered by Kutyniok and her coauthors in \cite{gita}. Among other things, they quantitatively measured this stability by considering
the associated deviations of the canonical and alternate dual sequences from the original
ones. Lately in \cite{japp}, the second author of the present paper, studied how perturbations of a frame affect the approximately dual frames. More precisely, he showed that if the sequence $\Psi=(\psi_n)_n$
is sufficiently close to
the frame $\Phi=(\varphi_n)_n$, then $\Psi$ is a frame for $\mathcal H$
and approximately dual frames
$\Phi^{ad}=(\varphi^{ad}_n)_n$ and $\Psi^{ad}=(\psi^{ad}_n)_n$ can be found
which are close to each other and $T_\Phi
U_{\Phi^{ad}}=T_\Psi U_{\Psi^{ad}}$, where $T_X$ and $U_X$
denote the synthesis
and analysis operators of the frame $X$, respectively.
It is worth mentioning that other observations on the approximately dual frames were investigated also in \cite{japp}.

We have two main goals in this paper. We first study some properties of (canonical) approximately dual frames which have not been touched so far. We then show that if we do a sufficiently small perturbation of a frame in the sense of \cite{per1,CHEN,per2}, the approximately dual of the new frame is also a small perturbation of
the approximately dual of the first one. In contrast to the work \cite{japp}, we consider the problem in a much more general setting. More precisely, we removed the imposed condition $T_\Phi
U_{\Phi^{ad}}=T_\Psi U_{\Psi^{ad}}$ and particularly, we
estimate for these cases the deviation from perfect reconstruction. These results pave the way for estimating the associated deviations of the canonical  [resp. alternate] approximately dual frames from the original ones which may be canonical  [resp. alternate] duals.


\section{Basic notations}

Throughout this paper, we denote by ${\mathcal H}$ a separable Hilbert
space with the inner product ``$\big<\cdot,\cdot\big>"$, the norm $\|\cdot\|$ and orthonormal basis $\xi=(e_n)_n$ and we use the set of natural numbers $\Bbb N$ as a generic index set for sequences and series in $\mathcal H$.
The notation $\ell^2$ is used
to denote the space of all square summable sequences on $\Bbb N$ equipped with the norm $\|\cdot\|_{\ell^2}$ and $\Delta=(\delta_n)_n$ refers to the canonical orthonormal basis of $\ell^2$. Furthermore, the notation $B({\mathcal H})$
[respectively, $B({\mathcal H},\ell^2)$] is used to denote the Banach space
of all bounded linear operators from $\mathcal H$ into $\mathcal
H$ [respectively, $\ell^2$].
For an operator $T\in B({\mathcal H})$,
the notations ${\mathcal R}(T)$ and $\ker(T)$ are used to denote the range and the
kernel of $T$, respectively; the notation $\|T\|_{\rm op}$ indicates the operator norm;
for closed subspace $\mathcal X$ of $\mathcal H$,
the letter $T|_{\mathcal X}$ refers to the restriction of $T$ to ${\mathcal X}$ and    $P_{\mathcal X}$ denotes the orthogonal projection of $\mathcal H$ onto $\mathcal X$.
Moreover, our notation and terminology are standard and, concerning frames in Hilbert spaces, they
are in general those of the book \cite{c} of Christensen.

Recall that a sequence
$\Phi=(\varphi_n)_n\subseteq{{\mathcal H}}$ is a frame for ${\mathcal H}$, if there exist constants $m_\Phi, M_\Phi>0$ such that
\begin{equation}\label{01}
m_\Phi\|f\|^2\leq\sum_{n=1}^\infty|\big<f,\varphi_n\big>|^2\leq M_\Phi\|f\|^2\quad\quad\quad(f\in{\mathcal H}),
\end{equation}
where $m_\Phi, M_\Phi$ are called frame bounds. If only
the right inequality of (\ref{01}) holds, then
$\Phi$ is called a Bessel sequence.
From now on, the notation ${\frak Fr}({\mathcal H})$ is used to denote the set of all frames in $\mathcal H$. Moreover, we define the frame norm on this set as defined below and motivated in Section 4:
$$\|\Phi\|_{{\frak Fr}}:=\|T_{\Phi}\|_{\rm op}.$$

In what follows, for a frame $\Phi$ in $\mathcal H$, the notation $U_\Phi:{\mathcal H}\rightarrow\ell^2$ with $U_\Phi(f):=(\big<f,\varphi_n\big>)_n$ denotes the
associated analysis operator. Its adjoint $T_\Phi:=U^*_\Phi$, the synthesis operator of $\Phi$, maps $\ell^2$
surjectively onto $\mathcal H$ and defined by $T_\Phi c:=\sum_{n=1}^\infty c_n\varphi_n$ for all $c=(c_n)_n\in\ell^2$.
The reader will remark that these operators can be defined for Bessel sequences as for frames.
Observe that $S_\Phi:=T_\Phi U_\Phi$, the frame operator of $\Phi$, is a bounded and positive self-adjoint operator on $\mathcal H$. In particular, each $f\in{\mathcal H}$ can be expressed as
\begin{eqnarray*}\label{02}
f=\sum_{n=1}^\infty\big<f,S_\Phi^{-1}\varphi_n\big>\varphi_n.
\end{eqnarray*}
In particular, if $\Phi$ is not a Riesz basis, then there exists
infinitely many sequences $\Phi^d=(\varphi_n^d)_n$, so-called a dual
of $\Phi$, such that the following reconstruction formula
holds
\begin{equation}\label{03}
f=\sum_{n=1}^\infty\big<f,\varphi_n^d\big>\varphi_n
\quad\quad\quad(f\in{\mathcal H}),
\end{equation}
see Theorem 5.2.3 of \cite{c}.
In terms of the operators
$T_{\Phi}$ and $U_{\Phi^d}$, the equality (\ref{03}) means that
$T_{\Phi}U_{\Phi^d}=Id_{\mathcal H}=T_{\Phi^d}U_{\Phi}$, where here and in
the sequel $Id_{\mathcal H}$ is the identity operator on ${\mathcal H}$.


We conclude this section by recalling the definition and some facts about the gap between two closed subspaces in $\mathcal H$.
Recall from \cite{gita} that for two closed subspace $\mathcal{X}$ and $\mathcal{Y}$ in ${\mathcal H}$ the gap from $\mathcal{X}$ to $\mathcal{Y}$ is defined by
\begin{align*}
\delta (\mathcal{X},\mathcal{Y}):=\big \| (Id_{{\mathcal H}}-P_{\mathcal{Y}})|_{\mathcal{X}}\big \|_{\rm op}=\big\|P_{\mathcal{Y}^{\bot}}|_{\mathcal{X}}\big\|_{\rm op}.
\end{align*}
It is notable that $\delta (\mathcal{X},\mathcal{Y})=\delta (\mathcal{Y}^{\bot},\mathcal{X}^{\bot})$ and
\begin{align*}
\Delta(\mathcal{Y},\mathcal{X}):=\|P_{\mathcal{Y}}-P_{\mathcal{X}}\|_{\rm op}=\max \big \{\delta (\mathcal{X},\mathcal{Y}),\delta (\mathcal{Y},\mathcal{X})\big\}.
\end{align*}
Particularly, if we have $\Delta(\mathcal{Y},\mathcal{X})<1$, then $\mathcal{X}\cap \mathcal{Y}^{\bot}=\{0\}=\mathcal{Y}\cap \mathcal{X}^{\bot}$ and the operators $P_{\mathcal{Y}}\mid_{\mathcal{X}}$ and $P_{\mathcal{X}}\mid_{\mathcal{Y}}$ are isomorphisms.

Finally, we would like to recall the following result from \cite{gita} which will be needed in the sequel.

\begin{lemma}\label{gap-11}
Let $\Phi$ be a frame for $\mathcal H$ and let $\Psi$ be a Bessel sequence. Then
$$\delta({\mathcal R}(U_{\Phi}),\overline{{\mathcal R}(U_{\Psi})})\leq\frac{\|T_\Phi-T_\Psi\|_{\rm op}}{\sqrt{m_\Phi}},$$
where bar denotes the norm closure.
\end{lemma}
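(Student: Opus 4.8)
The plan is to unwind the definition of the gap and reduce the estimate to a single application of the frame lower bound together with the Cauchy--Schwarz inequality. Write $\mathcal{X}:={\mathcal R}(U_\Phi)$ and $\mathcal{Y}:=\overline{{\mathcal R}(U_\Psi)}$, so that by definition $\delta(\mathcal{X},\mathcal{Y})=\|P_{\mathcal{Y}^\perp}|_{\mathcal{X}}\|_{\rm op}$. The first step is to identify $\mathcal{Y}^\perp$. Since the orthogonal complement of the closure of a range equals the kernel of the adjoint, and since $U_\Psi^*=T_\Psi$, we have $\mathcal{Y}^\perp=\overline{{\mathcal R}(U_\Psi)}^\perp=\ker(U_\Psi^*)=\ker(T_\Psi)$. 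Thus it suffices to bound $\|P_{\ker T_\Psi}\,c\|_{\ell^2}$ uniformly over unit vectors $c\in{\mathcal R}(U_\Phi)$.

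Next I would exploit that the frame lower bound $m_\Phi$ makes $U_\Phi$ bounded below: the left inequality in (\ref{01}) reads $\|U_\Phi f\|_{\ell^2}^2\geq m_\Phi\|f\|^2$, hence every $c\in{\mathcal R}(U_\Phi)$ has the form $c=U_\Phi f$ with $\|f\|\leq m_\Phi^{-1/2}\|c\|_{\ell^2}$. Fix such a $c$ and set $d:=P_{\ker T_\Psi}\,c$. The crux of the argument is the following computation. Because $d$ lies in $\ker T_\Psi$ while $c-d$ is orthogonal to $\ker T_\Psi$, the projection identity gives $\|d\|_{\ell^2}^2=\big<d,c\big>=\big<d,U_\Phi f\big>=\big<T_\Phi d,f\big>$, where I used $U_\Phi^*=T_\Phi$. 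Now comes the key observation: since $d\in\ker T_\Psi$ we have $T_\Psi d=0$, so $T_\Phi d=(T_\Phi-T_\Psi)d$, and therefore $\|d\|_{\ell^2}^2=\big<(T_\Phi-T_\Psi)d,f\big>$.

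From here the estimate is immediate. Applying the Cauchy--Schwarz inequality and then the operator-norm bound yields $\|d\|_{\ell^2}^2\leq\|(T_\Phi-T_\Psi)d\|\,\|f\|\leq\|T_\Phi-T_\Psi\|_{\rm op}\,\|d\|_{\ell^2}\,\|f\|$. Cancelling one factor of $\|d\|_{\ell^2}$ and inserting $\|f\|\leq m_\Phi^{-1/2}\|c\|_{\ell^2}$ gives $\|P_{\ker T_\Psi}\,c\|_{\ell^2}\leq m_\Phi^{-1/2}\|T_\Phi-T_\Psi\|_{\rm op}\,\|c\|_{\ell^2}$. Taking the supremum over unit vectors $c\in{\mathcal R}(U_\Phi)$ and recalling the identification $\mathcal{Y}^\perp=\ker T_\Psi$ produces exactly the claimed inequality. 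As for difficulty, the argument is short and essentially forced once the two guiding ideas are in place; the only genuinely non-routine step is the substitution $T_\Phi d=(T_\Phi-T_\Psi)d$, which is what converts the projection identity into a quantity controlled by the perturbation $T_\Phi-T_\Psi$. A minor point to verify is the passage to the closure, namely that $\overline{{\mathcal R}(U_\Psi)}^\perp=\ker T_\Psi$ even when ${\mathcal R}(U_\Psi)$ fails to be closed, but this is standard Hilbert-space theory and requires no extra hypothesis.
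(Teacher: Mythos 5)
Your proof is correct. Note that the paper itself does not prove this lemma --- it is recalled verbatim from the reference \cite{gita} --- so the only meaningful comparison is with the standard argument there, which runs through the synthesis side rather than the kernel: for $c=U_\Phi f\in{\mathcal R}(U_\Phi)$ one observes that $(Id_{\ell^2}-P_{\overline{{\mathcal R}(U_\Psi)}})c$ realizes the distance from $c$ to $\overline{{\mathcal R}(U_\Psi)}$, which is at most the distance to the particular element $U_\Psi f$, giving $\|(Id_{\ell^2}-P_{\overline{{\mathcal R}(U_\Psi)}})U_\Phi f\|_{\ell^2}\leq\|(U_\Phi-U_\Psi)f\|_{\ell^2}\leq\|T_\Phi-T_\Psi\|_{\rm op}\,\|f\|\leq m_\Phi^{-1/2}\|T_\Phi-T_\Psi\|_{\rm op}\|c\|_{\ell^2}$. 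Your route is the adjoint-side version of this: you identify $\overline{{\mathcal R}(U_\Psi)}^{\perp}=\ker(T_\Psi)$, set $d=P_{\ker T_\Psi}c$, and use the substitution $T_\Phi d=(T_\Phi-T_\Psi)d$ together with $\|d\|_{\ell^2}^2=\big<T_\Phi d,f\big>$. Every step checks out (the cancellation of $\|d\|_{\ell^2}$ is harmless when $d=0$, and $\ker(A^*)=\overline{{\mathcal R}(A)}^{\perp}$ needs no closedness of the range, as you note). The best-approximation argument is marginally more economical since it never invokes the adjoint identity or the orthogonality decomposition of $c$; your version has the mild advantage of making explicit exactly where the perturbation $T_\Phi-T_\Psi$ enters, namely through the vanishing of $T_\Psi$ on $\ker(T_\Psi)$, which is the same mechanism exploited later in the paper (e.g.\ the identity $T_{\widetilde{\Phi}(0)}P_{\ker(T_\Phi)}=0$ in the proof of Proposition \ref{cad}). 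Either way the bound is sharp in the same places and neither argument generalizes further than the other.
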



\section{Approximately dual frames}

In order to have different reconstruction strategies
various generalizations of Eq. (\ref{03}) have been proposed in the literatures.
One of them considered by Christensen and Laugesen \cite{app} in 2010.
They call two Bessel sequences
$\Phi$ and $\Phi^{ad}$ are
approximately dual frames if $\|Id_{\mathcal H}-T_\Phi U_{\Phi^{ad}}\|_{\rm op}<1$.
The approximation rate of approximately dual frames $\Phi$ and $\Phi^{ad}$ is the number $\varepsilon\in[0,1)$ for which $\|Id_{\mathcal H}-T_\Phi U_{\Phi^{ad}}\|_{\rm op}\leq\varepsilon$.
Later on, Dehghan and Hasankhani-Fard \cite{app2}, stated in \cite{japp} as well as earlier in \cite{wave1}, introduced
and studied the notion of generalized duality for frames in Hilbert spaces.
Recall from \cite[Remark 2.8(i)]{japp} that two frames $\Phi$ and $\Phi^{gd}$ are generalized dual frames, if $T_\Phi U_{\Phi^{gd}}$ is just invertible.

It is shown in \cite{japp}
that approximately
dual frames of $\Phi$ are precisely the sequences
\begin{equation}\label{0720}
\Big({\mathcal A}^*S_\Phi^{-1}\varphi_n+
\Theta^*(\delta_n)\Big)_n,
\end{equation}
where ${\mathcal A}$ is an
operator in $B({\mathcal H})$ for which
$\|Id_{\mathcal H}-{\mathcal A}\|_{\rm op}<1$ and
$$\Theta\in
\verb"ran"_{B({\mathcal H},\ell^2)}(T_\Phi):={\Big\{}\Theta\in B({\mathcal H},\ell^2):~
T_\Phi\Theta=0{\Big\}}.$$
This characterization can be viewed as an operator theoretical variant of a classical
result in \cite{c} for approximately dual frames.

The following notations
will be used frequently in the rest of the paper.

\begin{notation}\label{not}
Let $\Phi$ be a frame for $\mathcal{H}$. For the sake of notational convenience and better citation, in what follows the notation $\Phi_\Theta^{ad}({\mathcal A})$ is used to denote the approximate dual frame of $\Phi$
such that the n'th component, $\pi_n(\Phi_\Theta^{ad}({\mathcal A}))$, is equal to
${\mathcal A}^*S_\Phi^{-1}\varphi_n+
\Theta^*(\delta_n)$.
This says that
\begin{equation}\label{04}
{\mathcal A} f=\sum_{n=1}^\infty\big<f,\pi_n(\Phi_\Theta^{ad}({\mathcal A}))\big>\varphi_n
\quad\quad\quad(f\in{\mathcal H}),
\end{equation}
where we agree to write ${\mathcal A}=T_\Phi U_{\Phi_\Theta^{ad}({\mathcal A})}$.
In the case when ${\mathcal A}=Id_{\mathcal H}$ (alternate dual setting) we use the notation $\widetilde{\Phi}(\Theta)$ for the sequence
$$\Big(\pi_n(\widetilde{\Phi}(\Theta)):=S_\Phi^{-1}\varphi_n+
\Theta^*(\delta_n)\Big)_n.$$
With this notation, $\widetilde{\Phi}(0)$ refers to the canonical dual of $\Phi$ and particularly the synthesis operator of $\Phi_\Theta^{ad}({\mathcal A})$ is
$$T_{\Phi_\Theta^{ad}({\mathcal A})}=
{\mathcal A}^*T_{\widetilde{\Phi}(0)}+\Theta^{*}={\mathcal A}^*S_{\Phi}^{-1}T_{{\Phi}}+\Theta^{*}.$$
Moreover, the letter $\mathcal{AD}(\Phi)$ [resp. $\mathcal{D}(\Phi)$] is used to denote the set of all approximately [resp. alternate] dual frames of $\Phi$.
\end{notation}


If $m^{\rm opt}_\Phi$ refers to the optimal lower frame bound of $\Phi$, that is, the largest $m_\Phi$ that fulfill the corresponding inequality,
then recall from \cite[Proposition 5.4.4]{c} that
$\|U_{\widetilde{\Phi}(0)}\|_{\rm op}^2=(m^{\rm opt}_\Phi)^{-1}$.
The following result generalized this equality to approximately dual frames setting, where the identity operator replaced by an operator $\mathcal A$ with $\|Id_{\mathcal H}-{\mathcal A}\|_{\rm op}<1$. In details, it shows that there is a unique approximately dual
frame of $\Phi$ whose analysis operator obtains the minimal norm of the set
of the norms of analysis operators of all approximately dual frames of $\Phi$.

\begin{proposition}
Let $\Phi$ be a frame for ${\mathcal H}$. Then for any approximately dual frame
$\Phi_{\Theta}^{ad}({\mathcal A})$ of $\Phi$ we have
$$\|U_{\Phi_{\Theta}^{ad}({\mathcal A})}\|_{\rm op}^2\geq\Big(m^{\rm opt}_\Phi\|{\mathcal A}^{-1}\|_{\rm op}^2\Big)^{-1},$$
and $\Phi_{0}^{ad}({\mathcal A})$ is the unique approximately dual frame of $\Phi$ for which $$\|U_{\Phi_{0}^{ad}({\mathcal A})}\|_{\rm op}^2=\Big(m^{\rm opt}_\Phi\|{\mathcal A}^{-1}\|_{\rm op}^2\Big)^{-1}.$$
\end{proposition}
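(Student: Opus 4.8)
The plan is to reduce everything to a single identity for the analysis operator of an arbitrary approximately dual frame and then exploit an orthogonal splitting in $\ell^{2}$. From the synthesis formula $T_{\Phi_\Theta^{ad}(\mathcal A)}=\mathcal A^{*}S_\Phi^{-1}T_\Phi+\Theta^{*}$ recorded in Notation~\ref{not}, I would first take adjoints to obtain $U_{\Phi_\Theta^{ad}(\mathcal A)}=U_\Phi S_\Phi^{-1}\mathcal A+\Theta=U_{\widetilde\Phi(0)}\mathcal A+\Theta$, using that $S_\Phi^{-1}$ is self-adjoint and that $U_{\widetilde\Phi(0)}=U_\Phi S_\Phi^{-1}$ is the analysis operator of the canonical dual. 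This puts the two degrees of freedom $\mathcal A$ and $\Theta$ into additive form.

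The decisive structural point is that the two summands have orthogonal ranges. Since $\Phi$ is a frame, $U_\Phi$ is bounded below, so $\mathcal R(U_\Phi)$ is closed and equals $(\ker T_\Phi)^{\perp}$; as $S_\Phi^{-1}$ is invertible this is also $\mathcal R(U_{\widetilde\Phi(0)})$. On the other hand $T_\Phi\Theta=0$ forces $\mathcal R(\Theta)\subseteq\ker T_\Phi$. Hence for every $f\in\mathcal H$ the vectors $U_{\widetilde\Phi(0)}\mathcal A f$ and $\Theta f$ are orthogonal, and the Pythagorean identity gives $\|U_{\Phi_\Theta^{ad}(\mathcal A)}f\|^{2}=\|U_{\widetilde\Phi(0)}\mathcal A f\|^{2}+\|\Theta f\|^{2}$. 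Passing to the supremum over the unit ball yields $\|U_{\Phi_\Theta^{ad}(\mathcal A)}\|_{\rm op}\geq\|U_{\widetilde\Phi(0)}\mathcal A\|_{\rm op}$. For the displayed lower bound I would then combine this with submultiplicativity and the invertibility of $\mathcal A$: from $U_{\widetilde\Phi(0)}=U_{\widetilde\Phi(0)}\mathcal A\,\mathcal A^{-1}$ one gets $\|U_{\widetilde\Phi(0)}\|_{\rm op}\leq\|U_{\widetilde\Phi(0)}\mathcal A\|_{\rm op}\,\|\mathcal A^{-1}\|_{\rm op}$, whence $\|U_{\widetilde\Phi(0)}\mathcal A\|_{\rm op}^{2}\geq\|U_{\widetilde\Phi(0)}\|_{\rm op}^{2}\,\|\mathcal A^{-1}\|_{\rm op}^{-2}$, and inserting the recalled identity $\|U_{\widetilde\Phi(0)}\|_{\rm op}^{2}=(m^{\rm opt}_\Phi)^{-1}$ produces exactly $(m^{\rm opt}_\Phi\|\mathcal A^{-1}\|_{\rm op}^{2})^{-1}$.

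The main obstacle is the second assertion, that $\Theta=0$ realises equality and does so uniquely. The Pythagorean identity makes it immediate that $\Theta=0$ gives the smallest value $\|U_{\widetilde\Phi(0)}\mathcal A\|_{\rm op}$ among all admissible $\Theta$, so the delicate issue is twofold: showing that this minimal value actually coincides with the lower bound $(m^{\rm opt}_\Phi\|\mathcal A^{-1}\|_{\rm op}^{2})^{-1}$, i.e. that the submultiplicative estimate is attained, and upgrading ``smallest'' to ``uniquely smallest''. For the first I would seek a unit vector $f_{0}$ that both maximises $\|U_{\widetilde\Phi(0)}\cdot\|$ and is the image under $\mathcal A^{-1}$ of such a maximiser, which ties $\mathcal A$ to the spectral decomposition of $S_\Phi=U_\Phi^{*}U_\Phi$; for the second, the natural route is pointwise, since $\|U_{\Phi_\Theta^{ad}(\mathcal A)}f\|>\|U_{\widetilde\Phi(0)}\mathcal A f\|$ whenever $\Theta f\neq 0$, followed by a promotion of this strict pointwise comparison to the operator norm. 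This promotion is precisely the subtle step, because the operator norm only feels the extremal directions, and one must rule out that a nonzero $\Theta$ concentrates on directions where $\|U_{\widetilde\Phi(0)}\mathcal A\cdot\|$ is non-extremal; controlling the interaction between the maximising subspace of $U_{\widetilde\Phi(0)}\mathcal A$ and the range of $\Theta$ is where the real work of the proof lies.
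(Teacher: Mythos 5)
Your first half is correct, and it reaches the displayed inequality by a route that is different in packaging but equivalent in substance to the paper's. You write $U_{\Phi_\Theta^{ad}({\mathcal A})}=U_{\widetilde{\Phi}(0)}{\mathcal A}+\Theta$, observe that ${\mathcal R}(U_{\widetilde{\Phi}(0)})={\mathcal R}(U_\Phi)=\ker(T_\Phi)^{\perp}$ while ${\mathcal R}(\Theta)\subseteq\ker(T_\Phi)$, and conclude by Pythagoras and submultiplicativity; the paper instead computes $\|{\mathcal A}^*f\|^2=\big<T_\Phi U_{\Phi_\Theta^{ad}({\mathcal A})}T_{\Phi_\Theta^{ad}({\mathcal A})}U_\Phi f,f\big>\leq\|U_{\Phi_\Theta^{ad}({\mathcal A})}\|_{\rm op}^2\|U_\Phi f\|_{\ell^2}^2$ and feeds $\|f\|\leq\|{\mathcal A}^{-1}\|_{\rm op}\|{\mathcal A}^*f\|$ into the characterization $(m_\Phi^{\rm opt})^{-1}=\inf\{\gamma:\|f\|^2\leq\gamma\|U_\Phi f\|_{\ell^2}^2\}$. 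Your pointwise identity $\|U_{\Phi_\Theta^{ad}({\mathcal A})}f\|^2=\|U_{\Phi_0^{ad}({\mathcal A})}f\|^2+\|\Theta f\|^2$ is also exactly the minimality content the paper extracts from Douglas' theorem and the analogue of Lemma 5.4.2 of Christensen's book.

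The second assertion, however, you leave unproven, and this gap is genuine; moreover, the two obstacles you flag are not removable. For the equality $\|U_{\Phi_0^{ad}({\mathcal A})}\|_{\rm op}^2=\big(m_\Phi^{\rm opt}\|{\mathcal A}^{-1}\|_{\rm op}^2\big)^{-1}$ you would need the estimate $\|U_{\widetilde{\Phi}(0)}\|_{\rm op}\leq\|U_{\widetilde{\Phi}(0)}{\mathcal A}\|_{\rm op}\|{\mathcal A}^{-1}\|_{\rm op}$ to be attained, and it is not in general: take $\Phi=\xi$ an orthonormal basis, so $U_{\widetilde{\Phi}(0)}=U_\Phi$ is unitary, $m_\Phi^{\rm opt}=1$ and $\|U_{\Phi_0^{ad}({\mathcal A})}\|_{\rm op}=\|{\mathcal A}\|_{\rm op}$; the claimed equality then reads $\|{\mathcal A}\|_{\rm op}^2\|{\mathcal A}^{-1}\|_{\rm op}^2=1$, which fails for ${\mathcal A}$ the diagonal operator with entries $(1,\tfrac12,1,1,\dots)$ even though $\|Id_{\mathcal H}-{\mathcal A}\|_{\rm op}=\tfrac12<1$. (The paper's own proof conceals this at the step equating $\inf\{\delta:\|{\mathcal A}^*f\|^2\leq\delta\|U_\Phi f\|_{\ell^2}^2\}$ with $\inf\{\delta:\|f\|^2\leq\|{\mathcal A}^{-1}\|_{\rm op}^2\delta\|U_\Phi f\|_{\ell^2}^2\}$: the inequality $\|f\|\leq\|{\mathcal A}^{-1}\|_{\rm op}\|{\mathcal A}^*f\|$ only shows the first infimum dominates the second.) Likewise your worry about promoting the pointwise strict inequality to the operator norm is justified: a nonzero $\Theta$ supported away from the maximizing directions of $U_{\Phi_0^{ad}({\mathcal A})}$ can leave $\|U_{\Phi_\Theta^{ad}({\mathcal A})}\|_{\rm op}$ unchanged, so uniqueness of the norm minimizer also fails in general. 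What your argument does establish, and what survives of the proposition, is that $\Theta=0$ minimizes the analysis-operator norm and that $\|U_{\Phi_0^{ad}({\mathcal A})}\|_{\rm op}^2=\|U_{\widetilde{\Phi}(0)}{\mathcal A}\|_{\rm op}^2\geq\big(m_\Phi^{\rm opt}\|{\mathcal A}^{-1}\|_{\rm op}^2\big)^{-1}$; you should not attempt to force the stated equality or the stated uniqueness.
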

\begin{proof}
By definition, we observe that
$$\|f\|^2\leq\frac{1}{m_\Phi}\|U_{\Phi}f\|_{\ell^2}^2\quad\quad\quad(f\in{\mathcal H}).$$
It follows that
\begin{equation}\label{0157}
\frac{1}{m_\Phi^{\rm opt}}=\inf\Big\{\gamma:~\|f\|^2\leq\gamma\|U_{\Phi}f\|_{\ell^2}^2\quad\forall~f\in{\mathcal H}\Big\}.
\end{equation}
On the other hand, we have
\begin{align*}
\|{\mathcal A}^*f\|^2&=\big<{\mathcal A}{\mathcal A}^*f,f\big>\\
&=\big<T_{\Phi}U_{\Phi_{\Theta}^{ad}({\mathcal A})}T_{\Phi_{\Theta}^{ad}({\mathcal A})}U_{\Phi}f,f\big>\\
&\leq\|U_{\Phi_{\Theta}^{ad}({\mathcal A})}\|_{\rm op}^2\big<T_{\Phi}U_{\Phi}f,f\big>\\
&=\|U_{\Phi_{\Theta}^{ad}({\mathcal A})}\|_{\rm op}^2\|U_{\Phi}(f)\|_{\ell^2}^2.
\end{align*}
From this, by equality
$\|f\|\leq\|{\mathcal A}^{-1}\|_{\rm op}\|{\mathcal A}^*f\|$ ($f\in{\mathcal H}$), we deduce that
$$\|f\|^2\leq\|{\mathcal A}^{-1}\|_{\rm op}^2\|U_{\Phi_{\Theta}^{ad}({\mathcal A})}\|_{\rm op}^2\|U_{\Phi}(f)\|_{\ell^2}^2.$$
We now invoke Eq. (\ref{0157}) to conclude that
$$\|{\mathcal A}^{-1}\|_{\rm op}^2\|U_{\Phi_{\Theta}^{ad}({\mathcal A})}\|_{\rm op}^2\geq\frac{1}{m_\Phi^{\rm opt}}.$$
In order to prove that $\Phi_{0}^{ad}({\mathcal A})$ is the unique approximately dual frame of $\Phi$ for which $$\|U_{\Phi_{0}^{ad}({\mathcal A})}\|_{\rm op}^2=\Big(m^{\rm opt}_\Phi\|{\mathcal A}^{-1}\|_{\rm op}^2\Big)^{-1},$$
we first make use of Douglas' Theorem for surjective operators $T_{\Phi}$ and $\mathcal A$ and find that there exists a unique operator $R:{\mathcal H}\rightarrow\ell^2$ of minimal norm for which ${\mathcal A}=T_\Phi R$, particularly, we have
$$\|R\|_{\rm op}^2=\inf\Big\{\delta:~\|{\mathcal A}^*f\|^2\leq\delta\|Rf\|_{\ell^2}^2\quad\forall~f\in{\mathcal H}\Big\}.$$
On the other hand, an argument similar to the proof of \cite[Lemma 5.4.2]{c} shows that
if $f$ has a representation ${\mathcal A}f=\sum_{n=1}^\infty c_n\varphi_n$ for some coefficients $(c_n)_n$, then
$$\sum_{n=1}^\infty|c_n|^2=\sum_{n=1}^\infty|\big<f,{\mathcal A}^*S_{\Phi}^{-1}\varphi_n\big>|^2+\sum_{n=1}^\infty|c_n-\big<f,{\mathcal A}^*S_{\Phi}^{-1}\varphi_n\big>|^2.$$
It follows that $R=U_{\Phi_{0}^{ad}({\mathcal A})}$ and thus
\begin{align*}
\|U_{\Phi_{0}^{ad}({\mathcal A})}\|_{\rm op}^2&=\inf\Big\{\delta:~\|{\mathcal A}^*f\|^2\leq\delta\|U_{\Phi}f\|_{\ell^2}^2\quad\forall~f\in{\mathcal H}\Big\}\\
&=\inf\Big\{\delta:~\|f\|^2\leq\|{\mathcal A}^{-1}\|_{\rm op}^2\delta\|U_{\Phi}f\|_{\ell^2}^2\quad\forall~f\in{\mathcal H}\Big\}\\
&=\frac{1}{\|{\mathcal A}^{-1}\|_{\rm op}^2}\inf\Big\{\gamma:~\|f\|^2\leq\gamma\|U_{\Phi}f\|_{\ell^2}^2\quad\forall~f\in{\mathcal H}\Big\}\\
&=\frac{1}{m^{\rm opt}_\Phi\|{\mathcal A}^{-1}\|_{\rm op}^2}.
\end{align*}
We have now completed the proof of the theorem.
\end{proof}


The following remark is now immediate.

\begin{remark}
Following alternate dual frames setting, in what follows, the approximately dual frame $\Phi_{0}^{ad}({\mathcal A})$ of $\Phi$ is called the canonical approximately dual frame of $\Phi$.
\end{remark}


It is known that a Riesz basis for $\mathcal H$ is a family of the form $({\mathcal B}e_n)_n$, where $\mathcal B$ is a bijective operator in $B({\mathcal H})$ (see \cite[Definition 3.6.1]{c}). Particularly, if $\Phi$ is a Riesz basis, then its synthesis operator is injective. This together with the characterization (\ref{0720}) imply that the approximately dual frames of a Riesz basis  $\Phi$ such that $\varphi_n={\mathcal B}e_n$ ($n\in{\Bbb N}$), are precisely the sequences
$$\Big({\mathcal A}^*S_\Phi^{-1}{\mathcal B}e_n\Big)_n=\Big({\mathcal A}^*({\mathcal B}^*)^{-1}e_n\Big)_n,$$
where ${\mathcal A}$ is an operator in $B({\mathcal H})$ for which
$\|Id_{\mathcal H}-{\mathcal A}\|_{\rm op}<1$. Hence, an approximately dual frame of a Riesz basis is also a Riesz basis, but, it is not unique.
It is worthwhile to mention that approximately dual frames of a near-Riesz basis are also a near-Riesz basis. Let us recall that a frame $\Phi$ is called a near-Riesz basis whenever it consists of a Riesz basis and a finite number of extra elements. Particularly, the excess of a near-Riesz basis is defined to be the number of elements which have to be removed to obtain a Riesz basis. More generally, by \cite[Corollary 2.7]{excess} approximately dual frames have the same excess, that is,
$$\dim(\ker(T_\Phi))=\dim(\ker(T_{\Phi_{\Theta}^{ad}({\mathcal A})})),$$
for each ${\mathcal A}\in B({\mathcal H})$ with
$\|Id_{\mathcal H}-{\mathcal A}\|_{\rm op}<1$ and
$\Theta\in
\verb"ran"_{B({\mathcal H},\ell^2)}(T_\Phi)$.


\section{The perturbation effect on the duals}

Let us commence by recalling some perturbation conditions of frames in Hilbert spaces and investigate some results related to the Paley and Wiener perturbation Theorem.
\begin{itemize}
\item Following \cite{per2}, we say that the sequence $\Phi$ and $\Psi$ are quadratically close if
$${\rm q}:=\sum_{n=1}^\infty\|\varphi_n-\psi_n\|^2<\infty.$$
\item Inspired by \cite{CHEN}, we say that the frame $\Phi$ and the sequence
$\Psi$ in $\mathcal H$ is d-quadratically close if they are quadratically close with $m_\Phi\leq{\rm q}$ and
\begin{equation}\label{H1134}
{\rm q}_{\Lambda}:=\sum_{n=1}^\infty\|\varphi_n-\psi_n\|
\|\pi_n(\widetilde{\Phi}(\Lambda))\|<\infty,
\end{equation}
for some dual frame $\widetilde{\Phi}(\Lambda)$ of $\Phi$, and they are said to be c-quadratically close whenever \ref{H1134} is satisfied for $\Lambda=0$.
\item As usual we say that a Bessel sequence
$\Psi$ in $\mathcal H$ is a $\mu$-perturbation of
$\Phi$ if $$\|\Phi-\Psi\|_{{\frak Fr}}=\|T_\Phi-T_{\Psi}\|_{\rm op}\leq\mu.$$
\end{itemize}


The following result measures the similarity of a frame and a sequence. More precisely, it shows in particular that the perturbation of a frame remains
being a frame when the perturbation parameter is sufficiently small.

\begin{proposition}\label{per-1200}
Let $\Phi$ be a frame for $\mathcal H$. The following assertions hold for each sequence $\Psi$ in $\mathcal H$.
\begin{enumerate}
\item If $\Phi$ and $\Psi$ are d-quadratically close with ${\rm q}_{\Lambda}<1$, then $\Psi$ is a frame for $\mathcal H$ with bounds $\frac{1}{M_{\widetilde{\Phi}(\Lambda)}}(1-{\rm q}_{\Lambda})^2$ and $M_\Phi\Big(1+\sqrt{\frac{{\rm q}}{M_\Phi}}\Big)^2$. Particularly, if
\begin{enumerate}
\item $\sqrt{m_\Phi M_{\widetilde{\Phi}(\Lambda)}}\leq1-{\rm q}_{\Lambda}$, then   $$\Delta({\mathcal R}(U_\Phi),{\mathcal R}(U_\Psi))\leq\sqrt{\frac{{\rm q}}{m_\Phi}}.$$
\item $\sqrt{m_\Phi M_{\widetilde{\Phi}(\Lambda)}}>1-{\rm q}_{\Lambda}$, then   $$\Delta({\mathcal R}(U_\Phi),{\mathcal R}(U_\Psi))\leq\frac{\sqrt{{\rm q}M_{\widetilde{\Phi}(\Lambda)}}}{1-{\rm q}_{\Lambda}}.$$
\end{enumerate}
\item If $\Phi$ and $\Psi$ are c-quadratically close with ${\rm q}_{0}<1$, then $\Psi$ is a frame for $\mathcal H$ with bounds $m_\Phi(1-{\rm q}_{0})^2$ and $M_\Phi\Big(1+\sqrt{\frac{{\rm q}}{M_\Phi}}\Big)^2$. Particularly, we have
    $$\Delta({\mathcal R}(U_\Phi),{\mathcal R}(U_\Psi))\leq \frac{1}{1-{\rm q}_{0}}\sqrt{\frac{{\rm q}}{m_\Phi}}.$$
    \item If $\Psi$ is a $\mu$-perturbation of
$\Phi$ with $\mu<\sqrt{m_\Phi}$, then $\Psi$ is a frame for $\mathcal H$ with bounds $(\sqrt{m_\Phi}-\mu)^2$ and $(\sqrt{M_\Phi}+\mu)^2$ and particularly
$$\Delta({\mathcal R}(U_\Phi),{\mathcal R}(U_\Psi))\leq\frac{\mu}{\sqrt{m_\Phi}-\mu}.$$
\end{enumerate}
\end{proposition}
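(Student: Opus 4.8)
The plan is to handle the three items on a common footing. In every case I would first control the synthesis deviation $\|T_\Phi-T_\Psi\|_{\rm op}$, which delivers the upper (Bessel) bound for free; then produce the lower bound—and with it the frame property of $\Psi$—by a Neumann-series argument in the spirit of Paley and Wiener; and finally read off the gap estimates from Lemma \ref{gap-11} applied in both directions.

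For the upper bounds, the starting observation is that for any $c=(c_n)_n\in\ell^2$,
\[\Big\|\sum_n c_n(\varphi_n-\psi_n)\Big\|\leq\sum_n|c_n|\,\|\varphi_n-\psi_n\|\leq\sqrt{{\rm q}}\,\|c\|_{\ell^2}\]
by Cauchy--Schwarz, so that $T_\Phi-T_\Psi$ is bounded with $\|T_\Phi-T_\Psi\|_{\rm op}\leq\sqrt{{\rm q}}$ in the quadratically close cases (and $\leq\mu$ in item (3) by hypothesis). In particular $\Psi$ is Bessel, and $\|T_\Psi\|_{\rm op}\leq\sqrt{M_\Phi}+\sqrt{{\rm q}}$ gives at once the upper bound $M_\Phi(1+\sqrt{{\rm q}/M_\Phi})^2$ in (1) and (2), and $(\sqrt{M_\Phi}+\mu)^2$ in (3).

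The substance of the proof is the lower bound. Using the reconstruction identity $T_\Phi U_{\widetilde\Phi(\Lambda)}=Id_{\mathcal H}$, I would estimate, for $f\in\mathcal H$,
\[\Big\|f-T_\Psi U_{\widetilde\Phi(\Lambda)}f\Big\|=\Big\|\sum_n\big<f,\pi_n(\widetilde\Phi(\Lambda))\big>(\varphi_n-\psi_n)\Big\|\leq\sum_n\|\pi_n(\widetilde\Phi(\Lambda))\|\,\|\varphi_n-\psi_n\|\,\|f\|={\rm q}_\Lambda\|f\|.\]
Hence $W:=T_\Psi U_{\widetilde\Phi(\Lambda)}$ obeys $\|Id_{\mathcal H}-W\|_{\rm op}\leq{\rm q}_\Lambda<1$, so $W$ is invertible with $\|W^{-1}\|_{\rm op}\leq(1-{\rm q}_\Lambda)^{-1}$; its surjectivity forces $T_\Psi$ to be surjective, and a surjective Bessel synthesis operator means $\Psi$ is a frame. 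To turn this into a quantitative lower bound I would pass to the adjoint $W^*=T_{\widetilde\Phi(\Lambda)}U_\Psi$ and write $\|f\|\leq\|(W^*)^{-1}\|_{\rm op}\,\|W^*f\|\leq(1-{\rm q}_\Lambda)^{-1}\sqrt{M_{\widetilde\Phi(\Lambda)}}\,\|U_\Psi f\|$, which exhibits the lower frame bound $(1-{\rm q}_\Lambda)^2/M_{\widetilde\Phi(\Lambda)}$ of (1). Item (2) is the case $\Lambda=0$, where $M_{\widetilde\Phi(0)}=(m^{\rm opt}_\Phi)^{-1}\leq m_\Phi^{-1}$ converts this into the stated $m_\Phi(1-{\rm q}_0)^2$; item (3) is more direct, since $\|U_\Psi f\|\geq\|U_\Phi f\|-\mu\|f\|\geq(\sqrt{m_\Phi}-\mu)\|f\|$.

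The gap estimates would then follow by invoking Lemma \ref{gap-11} twice. It bounds the one-sided gap $\delta(\mathcal R(U_\Phi),\mathcal R(U_\Psi))$ by $\|T_\Phi-T_\Psi\|_{\rm op}/\sqrt{m_\Phi}$, and---now that $\Psi$ is known to be a frame---the reverse gap $\delta(\mathcal R(U_\Psi),\mathcal R(U_\Phi))$ by $\|T_\Phi-T_\Psi\|_{\rm op}/\sqrt{m_\Psi}$. Since $\Delta$ is the larger of the two, comparing $\sqrt{{\rm q}/m_\Phi}$ with $\sqrt{{\rm q}M_{\widetilde\Phi(\Lambda)}}/(1-{\rm q}_\Lambda)$ produces precisely the dichotomy according to whether $\sqrt{m_\Phi M_{\widetilde\Phi(\Lambda)}}\leq1-{\rm q}_\Lambda$ (case (a)) or not (case (b)); parts (2) and (3) are the corresponding specializations. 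The step I expect to be the main obstacle is exactly this lower-bound argument: converting the invertibility of $W$ into a genuine lower frame bound for $\Psi$ requires working with the adjoint $W^*=T_{\widetilde\Phi(\Lambda)}U_\Psi$ rather than with $W$, and the ensuing bookkeeping---keeping $m_\Phi$, the new bound $m_\Psi$, and $M_{\widetilde\Phi(\Lambda)}$ properly separated---is what dictates the two-case split in the gap estimate.
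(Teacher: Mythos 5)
Your proposal is correct and follows essentially the same route as the paper: the gap estimates come from applying Lemma \ref{gap-11} in both directions and taking the maximum, with the two-case split in (1) arising from exactly the comparison $\sqrt{{\rm q}/m_\Phi}$ versus $\sqrt{{\rm q}M_{\widetilde{\Phi}(\Lambda)}}/(1-{\rm q}_\Lambda)$ that the paper records. The only difference is that the paper simply cites \cite[Theorem 2.1]{CHEN} for the frame bounds in (1)--(2) and \cite[Theorem 4.6]{gita} for assertion (3), whereas you supply the standard Neumann-series argument (via $W=T_\Psi U_{\widetilde{\Phi}(\Lambda)}$ and its adjoint) in full, which is a faithful and correct reconstruction of those cited results.
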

\begin{proof}
Assertion (3) is proved in the paper \cite[Theorem 4.6]{gita}. Moreover, assertion (2) is a special case of (1) for the case when $\Theta=0$ and $M_{\widetilde{\Phi}(0)}=\frac{1}{m_\Phi}$. Hence, it will be enough to prove assertion (1). To this end, first note that the frame bounds of $\Psi$ are obtained in the paper \cite[Theorem 2.1]{CHEN}. We now make use of Lemma \ref{gap-11} for $(\Phi,\Psi)$ and $(\Psi,\Phi)$ and find that
$$\delta({\mathcal R}(U_\Phi),{\mathcal R}(U_\Psi))\leq\sqrt{\frac{{\rm q}}{m_\Phi}},$$
and
\begin{align*}
\delta({\mathcal R}(U_\Phi),{\mathcal R}(U_\Psi))&\leq\frac{\sqrt{{\rm q}M_{\widetilde{\Phi}(\Lambda)}}}{1-{\rm q}_\Lambda}\\
&=\sqrt{\frac{{\rm q}}{m_\Phi}}\;\frac{\sqrt{m_\Phi M_{\widetilde{\Phi}(\Lambda)}}}{1-{\rm q}_\Lambda}.
\end{align*}
Hence, the claims follow from the definition of the gap between the closed subspaces ${\mathcal R}(U_\Phi)$ and ${\mathcal R}(U_\Psi)$.
\end{proof}


The following result paves the way for measuring the associated deviations of the canonical and alternate approximately dual frames from the original
ones.

\begin{lemma}\label{dis}
Let $\Phi$ and $\Psi$ be two frames for ${\mathcal H}$ and let
${\mathcal A}_1, {\mathcal A}_2$ be two operators in $B({\mathcal H})$ with $\|Id_{\mathcal H}-{\mathcal A}_i\|_{\rm op}<1$ {\rm(}$i=1, 2${\rm)}. Then we have
\begin{align*}
T_{\Psi_{\Theta_2}^{ad}({\mathcal A}_2)}-T_{\Phi_{\Theta_1}^{ad}({\mathcal A}_1)}&=T_{\Phi_{\Theta_1}^{ad}({\mathcal A}_1)}
(U_\Phi-U_\Psi)T_{\widetilde{\Psi}(0)}+\Theta_2^*\\&-T_{\Phi_{\Theta_1}^{ad}({\mathcal A}_1)}
P_{\ker (T_{\Psi})}-({\mathcal A}_1^*-{\mathcal A}_2^*)T_{\widetilde{\Psi}(0)}.
\end{align*}
\end{lemma}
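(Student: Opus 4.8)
The plan is to reduce everything to the explicit formula for the synthesis operator of an approximately dual frame recorded in Notation \ref{not}, namely $T_{\Phi_{\Theta_1}^{ad}(\mathcal{A}_1)} = \mathcal{A}_1^* T_{\widetilde{\Phi}(0)} + \Theta_1^* = \mathcal{A}_1^* S_\Phi^{-1} T_\Phi + \Theta_1^*$ and $T_{\Psi_{\Theta_2}^{ad}(\mathcal{A}_2)} = \mathcal{A}_2^* T_{\widetilde{\Psi}(0)} + \Theta_2^*$. Rather than manipulate the left-hand side, I would start from the four summands on the right and simplify each using two structural identities, after which the desired equality falls out by cancellation.

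The first identity is $T_{\Phi_{\Theta_1}^{ad}(\mathcal{A}_1)}\, U_\Phi = \mathcal{A}_1^*$. This is obtained by taking adjoints in the defining relation $\mathcal{A}_1 = T_\Phi U_{\Phi_{\Theta_1}^{ad}(\mathcal{A}_1)}$ from \eqref{04}, using $T_\Phi^* = U_\Phi$ together with $U_{\Phi_{\Theta_1}^{ad}(\mathcal{A}_1)}^* = T_{\Phi_{\Theta_1}^{ad}(\mathcal{A}_1)}$. The second, and more substantial, identity is the projection formula $U_\Psi T_{\widetilde{\Psi}(0)} = Id_{\ell^2} - P_{\ker(T_\Psi)}$. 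To establish it I would write $U_\Psi T_{\widetilde{\Psi}(0)} = U_\Psi S_\Psi^{-1} T_\Psi$ and check directly that this operator is idempotent and self-adjoint (using that $S_\Psi^{-1}$ is self-adjoint and that $T_\Psi U_\Psi = S_\Psi$), so that it is an orthogonal projection on $\ell^2$. Its range is precisely $\mathcal{R}(U_\Psi)$, which is closed because $\Psi$ is a frame; since $\ker(T_\Psi) = \mathcal{R}(U_\Psi)^{\bot}$ (as $T_\Psi = U_\Psi^*$), the complementary projection is exactly $P_{\ker(T_\Psi)}$, which gives the stated formula. I expect this projection identity to be the one place that requires genuine care.

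With these two identities in hand the remaining computation is mechanical. Expanding $T_{\Phi_{\Theta_1}^{ad}(\mathcal{A}_1)}(U_\Phi - U_\Psi)T_{\widetilde{\Psi}(0)}$ splits it into a term $\mathcal{A}_1^* T_{\widetilde{\Psi}(0)}$, by the first identity, and a term $-T_{\Phi_{\Theta_1}^{ad}(\mathcal{A}_1)}(Id_{\ell^2} - P_{\ker(T_\Psi)}) = -T_{\Phi_{\Theta_1}^{ad}(\mathcal{A}_1)} + T_{\Phi_{\Theta_1}^{ad}(\mathcal{A}_1)} P_{\ker(T_\Psi)}$, by the second. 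Substituting these back into the right-hand side of the claimed equation, the two occurrences of $T_{\Phi_{\Theta_1}^{ad}(\mathcal{A}_1)} P_{\ker(T_\Psi)}$ cancel and so do the two occurrences of $\mathcal{A}_1^* T_{\widetilde{\Psi}(0)}$, leaving $-T_{\Phi_{\Theta_1}^{ad}(\mathcal{A}_1)} + \Theta_2^* + \mathcal{A}_2^* T_{\widetilde{\Psi}(0)}$. Recognizing $\Theta_2^* + \mathcal{A}_2^* T_{\widetilde{\Psi}(0)} = T_{\Psi_{\Theta_2}^{ad}(\mathcal{A}_2)}$ then yields exactly $T_{\Psi_{\Theta_2}^{ad}(\mathcal{A}_2)} - T_{\Phi_{\Theta_1}^{ad}(\mathcal{A}_1)}$, completing the proof. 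The only subtlety worth flagging is keeping the adjoints and the order of composition consistent throughout, since all the maps shuttle between $\mathcal{H}$ and $\ell^2$.
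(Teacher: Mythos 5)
Your proof is correct and rests on exactly the same two identities as the paper's — $\mathcal{A}_1^* = T_{\Phi_{\Theta_1}^{ad}(\mathcal{A}_1)}U_\Phi$ and $U_\Psi T_{\widetilde{\Psi}(0)} = P_{\mathcal{R}(U_\Psi)} = Id_{\ell^2}-P_{\ker(T_\Psi)}$ — the only difference being that you simplify the right-hand side down to the left rather than expanding the left-hand side, which is a cosmetic reversal. Your explicit verification that $U_\Psi S_\Psi^{-1}T_\Psi$ is the orthogonal projection onto $\mathcal{R}(U_\Psi)$ is a welcome detail the paper merely asserts.
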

\begin{proof}
First note that $\ell^2={\mathcal R}(U_{\Psi})\oplus \ker(T_{\Psi})$ and thus $P_{\ker(T_{\Psi})}+P_{{\mathcal R}(U_{\Psi})}= Id_{\ell^2}$,
where $P_X$ denotes the orthogonal projection of $\ell^2$ onto $X$. In particular, $P_{{\mathcal R}(U_{\Psi})}=U_{\Psi}T_{\widetilde{\Psi}(0)}$. Hence, we observe that
\begin{align*}
T_{\Psi_{\Theta_2}^{ad}({\mathcal A}_2)}-T_{\Phi_{\Theta_1}^{ad}({\mathcal A}_1)}&=
{\mathcal A}_2^*T_{\widetilde{\Psi}(0)}+\Theta_2^*-T_{\Phi_{\Theta_1}^{ad}({\mathcal A}_1)}(P_{\ker(T_{\Psi})}+P_{{\mathcal R}(U_{\Psi})})\\
&={\mathcal A}_1^*T_{\widetilde{\Psi}(0)}-T_{\Phi_{\Theta_1}^{ad}({\mathcal A}_1)}P_{{\mathcal R}(U_{\Psi})}+\Theta_2^*\\&-T_{\Phi_{\Theta_1}^{ad}({\mathcal A}_1)}P_{\ker(T_{\Psi})}-({\mathcal A}_1^*-{\mathcal A}_2^*)T_{\widetilde{\Psi}(0)}\\
&=T_{\Phi_{\Theta_1}^{ad}({\mathcal A}_1)}U_{\Phi}T_{\widetilde{\Psi}(0)}-T_{\Phi_{\Theta_1}^{ad}({\mathcal A}_1)}U_{\Psi}T_{\widetilde{\Psi}(0)}+\Theta_2^*\\&-T_{\Phi_{\Theta_1}^{ad}({\mathcal A}_1)}P_{\ker(T_{\Psi})}-({\mathcal A}_1^*-{\mathcal A}_2^*)T_{\widetilde{\Psi}(0)}\\
&=T_{\Phi_{\Theta_1}^{ad}({\mathcal A}_1)}
(U_\Phi-U_\Psi)T_{\widetilde{\Psi}(0)}+\Theta_2^*\\&-T_{\Phi_{\Theta_1}^{ad}({\mathcal A}_1)}
P_{\ker (T_{\Psi})}-({\mathcal A}_1^*-{\mathcal A}_2^*)T_{\widetilde{\Psi}(0)},
\end{align*}
and the lemma is proven.
\end{proof}


The following result estimate the deviation of the canonical approximately dual of original and perturbed sequence. Particularly, it paves the way for computing the distance between the canonical dual and canonical approximately dual of perturbed sequence with respect to the norm $\|\cdot\|_{\frak Fr}$, see Proposition \ref{prop-dis} below.

\begin{proposition}\label{cad}
Let $\Phi$ be a frame for $\mathcal H$ and let $\Psi$ be a $\mu$-perturbation of $\Phi$ with $\mu<\sqrt{m_{\Phi}}$. Then $\Psi$ is a frame for $\mathcal H$ with lower frame bound $(\sqrt{m_\Phi}-\mu)^2$ and particularly, if ${\mathcal A}_1$ and ${\mathcal A}_2$ are two operators in $B({\mathcal H})$ with $\|Id_{\mathcal H}-{\mathcal A}_i\|_{\rm op}<1$ {\rm(}$i=1, 2${\rm)}, then
for the canonical approximate duals $\Phi_{0}^{ad}({\mathcal A}_1)$ and $\Psi_{0}^{ad}({\mathcal A}_2)$ of $\Phi$ and $\Psi$, respectively, we have
\begin{align*}
\Big\|\Phi_{0}^{ad}({\mathcal A}_1)-\Psi_{0}^{ad}({\mathcal A}_2)\Big\|_{{\frak Fr}}&\leq\frac{2\mu\|{\mathcal A}_1\|_{\rm op}}{\sqrt{m_{\Phi}}(\sqrt{m_{\Phi}}-\mu)}+\frac{\|{\mathcal A}_1-{\mathcal A}_2\|_{\rm op}}{\sqrt{m_{\Phi}}-\mu}\\
&\leq\frac{2\mu}{\sqrt{m_{\Phi}}(\sqrt{m_{\Phi}}-\mu)}
+\frac{\varepsilon_1(2\mu+\sqrt{m_\Phi})}{\sqrt{m_{\Phi}}(\sqrt{m_{\Phi}}-\mu)}
+\frac{\varepsilon_2}{\sqrt{m_{\Phi}}-\mu},
\end{align*}
where $\varepsilon_1$ and $\varepsilon_2$ are the approximation rates of $\Big(\Phi,\Phi_{0}^{ad}({\mathcal A}_1)\Big)$ and $\Big(\Psi,\Psi_{0}^{ad}({\mathcal A}_2)\Big)$, respectively.
\end{proposition}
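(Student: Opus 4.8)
The plan is to reduce everything to Lemma~\ref{dis} applied in the canonical case $\Theta_1=\Theta_2=0$, and then to estimate the three resulting summands separately. First I would invoke Proposition~\ref{per-1200}(3) to conclude at once that $\Psi$ is a frame with lower bound $(\sqrt{m_\Phi}-\mu)^2$; combined with the identity $\|U_{\widetilde{\Psi}(0)}\|_{\rm op}^2=(m^{\rm opt}_\Psi)^{-1}$ recalled before the previous proposition and with $m^{\rm opt}_\Psi\ge(\sqrt{m_\Phi}-\mu)^2$, this yields the key norm bound $\|T_{\widetilde{\Psi}(0)}\|_{\rm op}=\|U_{\widetilde{\Psi}(0)}\|_{\rm op}\le(\sqrt{m_\Phi}-\mu)^{-1}$. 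In the same vein, Notation~\ref{not} gives $T_{\Phi_{0}^{ad}({\mathcal A}_1)}={\mathcal A}_1^*T_{\widetilde{\Phi}(0)}={\mathcal A}_1^*S_\Phi^{-1}T_\Phi$, whence $\|T_{\Phi_{0}^{ad}({\mathcal A}_1)}\|_{\rm op}\le\|{\mathcal A}_1\|_{\rm op}\,\|T_{\widetilde{\Phi}(0)}\|_{\rm op}\le\|{\mathcal A}_1\|_{\rm op}/\sqrt{m_\Phi}$.

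With $\Theta_2=0$, Lemma~\ref{dis} expresses $T_{\Psi_{0}^{ad}({\mathcal A}_2)}-T_{\Phi_{0}^{ad}({\mathcal A}_1)}$ as a sum of three terms, and since $\|\cdot\|_{{\frak Fr}}$ is by definition the operator norm of the corresponding synthesis operator I would bound each summand by the triangle inequality and submultiplicativity. The first term $T_{\Phi_{0}^{ad}({\mathcal A}_1)}(U_\Phi-U_\Psi)T_{\widetilde{\Psi}(0)}$ is controlled using $\|U_\Phi-U_\Psi\|_{\rm op}=\|T_\Phi-T_\Psi\|_{\rm op}\le\mu$ together with the two norm bounds above, producing $\mu\|{\mathcal A}_1\|_{\rm op}/(\sqrt{m_\Phi}(\sqrt{m_\Phi}-\mu))$. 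The third term $({\mathcal A}_1^*-{\mathcal A}_2^*)T_{\widetilde{\Psi}(0)}$ is immediately at most $\|{\mathcal A}_1-{\mathcal A}_2\|_{\rm op}/(\sqrt{m_\Phi}-\mu)$.

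The term I expect to be the main obstacle is the middle one, $T_{\Phi_{0}^{ad}({\mathcal A}_1)}P_{\ker(T_\Psi)}$, since a crude estimate would only return $\|T_{\Phi_{0}^{ad}({\mathcal A}_1)}\|_{\rm op}$, which is too weak to close the argument. The idea is to exploit that $T_{\widetilde{\Phi}(0)}=S_\Phi^{-1}T_\Phi$ annihilates ${\mathcal R}(U_\Phi)^\perp=\ker(T_\Phi)$, so that $T_{\widetilde{\Phi}(0)}=T_{\widetilde{\Phi}(0)}P_{{\mathcal R}(U_\Phi)}$, and thereby to force the projection product $P_{{\mathcal R}(U_\Phi)}P_{\ker(T_\Psi)}$ to appear. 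Using $\ker(T_\Psi)={\mathcal R}(U_\Psi)^\perp$ and the symmetry $\delta(\mathcal{X},\mathcal{Y})=\delta(\mathcal{Y}^\perp,\mathcal{X}^\perp)$, this product has norm exactly $\delta({\mathcal R}(U_\Phi),{\mathcal R}(U_\Psi))$, which Lemma~\ref{gap-11} bounds by $\mu/\sqrt{m_\Phi}$ (note that ${\mathcal R}(U_\Psi)$ is already closed, $\Psi$ being a frame). Hence the middle term is at most $\mu\|{\mathcal A}_1\|_{\rm op}/m_\Phi\le\mu\|{\mathcal A}_1\|_{\rm op}/(\sqrt{m_\Phi}(\sqrt{m_\Phi}-\mu))$, and adding it to the first term supplies the factor $2\mu$; summing the three estimates gives the first claimed inequality.

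For the second inequality I would only use the defining relations ${\mathcal A}_1=T_\Phi U_{\Phi_{0}^{ad}({\mathcal A}_1)}$ and ${\mathcal A}_2=T_\Psi U_{\Psi_{0}^{ad}({\mathcal A}_2)}$ from Notation~\ref{not}, so that the approximation-rate hypotheses read $\|Id_{\mathcal H}-{\mathcal A}_i\|_{\rm op}\le\varepsilon_i$. These yield $\|{\mathcal A}_1\|_{\rm op}\le1+\varepsilon_1$ and $\|{\mathcal A}_1-{\mathcal A}_2\|_{\rm op}\le\varepsilon_1+\varepsilon_2$ by the triangle inequality. Substituting into the first inequality and collecting the two $\varepsilon_1$ contributions over the common denominator $\sqrt{m_\Phi}(\sqrt{m_\Phi}-\mu)$ produces the coefficient $(2\mu+\sqrt{m_\Phi})$ on $\varepsilon_1$ and leaves the term $\varepsilon_2/(\sqrt{m_\Phi}-\mu)$, which is exactly the asserted bound.
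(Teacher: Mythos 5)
Your proposal is correct and follows essentially the same route as the paper: both apply Lemma~\ref{dis} with $\Theta_1=\Theta_2=0$ and estimate the three resulting summands, with identical bounds for the first and third. The only (harmless) variation is in the middle term, where the paper inserts $P_{\ker(T_\Phi)}-P_{\ker(T_\Psi)}$ and bounds $\Delta(\ker(T_\Phi),\ker(T_\Psi))\le\mu/(\sqrt{m_\Phi}-\mu)$ via Proposition~\ref{per-1200}(3), while you insert $P_{{\mathcal R}(U_\Phi)}$ and bound $\delta({\mathcal R}(U_\Phi),{\mathcal R}(U_\Psi))\le\mu/\sqrt{m_\Phi}$ via Lemma~\ref{gap-11} --- a marginally sharper intermediate estimate that relaxes to the same stated constant; your derivation of the second inequality from $\|{\mathcal A}_1\|_{\rm op}\le 1+\varepsilon_1$ and $\|{\mathcal A}_1-{\mathcal A}_2\|_{\rm op}\le\varepsilon_1+\varepsilon_2$ correctly supplies a step the paper leaves implicit.
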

\begin{proof}
If we apply Lemma \ref{dis} for $\Theta_1=0=\Theta_2$, we get
\begin{align*}
T_{\Psi_{0}^{ad}({\mathcal A}_2)}-T_{\Phi_{0}^{ad}({\mathcal A}_1)}&=T_{\Phi_{0}^{ad}({\mathcal A}_1)}
(U_\Phi-U_\Psi)T_{\widetilde{\Psi}(0)}-T_{\Phi_{0}^{ad}({\mathcal A}_1)}
P_{\ker (T_{\Psi})}-({\mathcal A}_1^*-{\mathcal A}_2^*)T_{\widetilde{\Psi}(0)}\\
&={\mathcal A}_1^*T_{\widetilde{\Phi}(0)}
(U_\Phi-U_\Psi)T_{\widetilde{\Psi}(0)}-{\mathcal A}_1^*T_{\widetilde{\Phi}(0)}
P_{\ker (T_{\Psi})}-({\mathcal A}_1^*-{\mathcal A}_2^*)T_{\widetilde{\Psi}(0)}\\
&={\mathcal A}_1^*T_{\widetilde{\Phi}(0)}
(U_\Phi-U_\Psi)T_{\widetilde{\Psi}(0)}-{\mathcal A}_1^*T_{\widetilde{\Phi}(0)}(P_{\ker (T_{\Phi})}-
P_{\ker (T_{\Psi})})\\&-({\mathcal A}_1^*-{\mathcal A}_2^*)T_{\widetilde{\Psi}(0)};
\end{align*}
The reader will remark that in the last equality we use the following fact
$$T_{\widetilde{\Phi}(0)}P_{\ker (T_{\Phi})}=S_{\Phi}^{-1}T_{\Phi}P_{\ker (T_{\Phi})}=0.$$
We now invoke part (3) of Proposition \ref{per-1200} and the equality
$$\Delta(\ker(T_{\Phi}),\ker(T_{\Psi}))=\Delta({\mathcal R}(U_\Phi),{\mathcal R}(U_\Psi))$$
to conclude that
\begin{align*}
\Big\|\Phi_{0}^{ad}({\mathcal A}_1)-\Psi_{0}^{ad}({\mathcal A}_2)\Big\|_{{\frak Fr}}&=
\|T_{\Phi_{0}^{ad}({\mathcal A}_1)}-T_{\Psi_{0}^{ad}({\mathcal A}_2)}\|_{\rm op}\\
&\leq\frac{\mu\|{\mathcal A}_1\|_{\rm op}}{\sqrt{m_{\Phi}}(\sqrt{m_{\Phi}}-\mu)}+\frac{\|{\mathcal A}_1\|_{\rm op}}{\sqrt{m_{\Phi}}}\Delta(\ker(T_{\Phi}),\ker(T_{\Psi}))\\&+\frac{\|{\mathcal A}_1-{\mathcal A}_2\|_{\rm op}}{\sqrt{m_{\Phi}}-\mu}\\
&\leq\frac{2\mu\|{\mathcal A}_1\|_{\rm op}}{\sqrt{m_{\Phi}}(\sqrt{m_{\Phi}}-\mu)}+\frac{\|{\mathcal A}_1-{\mathcal A}_2\|_{\rm op}}{\sqrt{m_{\Phi}}-\mu},
\end{align*}
and the proposition is proven.
\end{proof}


As an immediate consequence we have the following result which study how perturbation effects the canonical dual of original and canonical approximate dual of perturbed
sequence.

\begin{proposition}\label{prop-dis}
Let $\Phi$ be a frame for $\mathcal H$ and let $\Psi$ be a $\mu$-perturbation of $\Phi$ with $\mu<m_{\Phi}$. Then $\Psi$ is a frame for $\mathcal H$ with lower frame bound $(\sqrt{m_\Phi}-\mu)^2$ and particularly, if ${\mathcal A}$ is an operator in $B({\mathcal H})$ with $\|Id_{\mathcal H}-{\mathcal A}\|_{\rm op}<1$, then
for the canonical dual $\widetilde{\Phi}(0)$ of $\Phi$ and canonical approximate dual $\Psi_{0}^{ad}({\mathcal A})$ of $\Psi$, we have
\begin{align*}
\Big\|\widetilde{\Phi}(0)-\Psi_{0}^{ad}({\mathcal A})\Big\|_{\frak Fr}&\leq\frac{2\mu\|{\mathcal A}\|_{\rm op}+\varepsilon\sqrt{m_{\Phi}}}{\sqrt{m_{\Phi}}(\sqrt{m_{\Phi}}-\mu)},
\end{align*}
where $\varepsilon$ is the approximation rate of $\Big(\Psi,\Psi_{0}^{ad}({\mathcal A})\Big)$.
\end{proposition}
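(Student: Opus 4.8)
The plan is to obtain this estimate as a direct specialization of Proposition \ref{cad}. The crucial observation is that, in the language of Notation \ref{not}, the canonical dual $\widetilde{\Phi}(0)$ coincides with the canonical approximately dual frame $\Phi_{0}^{ad}(Id_{\mathcal H})$, since $\pi_n(\Phi_{0}^{ad}(Id_{\mathcal H}))=Id_{\mathcal H}^{*}S_{\Phi}^{-1}\varphi_n=S_{\Phi}^{-1}\varphi_n=\pi_n(\widetilde{\Phi}(0))$. Before invoking Proposition \ref{cad}, I would first record that the hypothesis on $\mu$ should place us in the regime $\mu<\sqrt{m_{\Phi}}$ in which part (3) of Proposition \ref{per-1200} applies, so that $\Psi$ is indeed a frame for $\mathcal H$ with lower bound $(\sqrt{m_{\Phi}}-\mu)^2$ and $\Psi_{0}^{ad}(\mathcal A)$ is well defined.

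With this identification I would apply Proposition \ref{cad} with the choice $\mathcal{A}_1=Id_{\mathcal H}$ and $\mathcal{A}_2=\mathcal A$. Then $\Phi_{0}^{ad}(\mathcal{A}_1)=\widetilde{\Phi}(0)$ and $\Psi_{0}^{ad}(\mathcal{A}_2)=\Psi_{0}^{ad}(\mathcal A)$, so the left-hand side of Proposition \ref{cad} is exactly $\|\widetilde{\Phi}(0)-\Psi_{0}^{ad}(\mathcal A)\|_{\frak Fr}$, while its right-hand side becomes
$$\frac{2\mu\|\mathcal{A}_1\|_{\rm op}}{\sqrt{m_{\Phi}}(\sqrt{m_{\Phi}}-\mu)}+\frac{\|\mathcal{A}_1-\mathcal{A}_2\|_{\rm op}}{\sqrt{m_{\Phi}}-\mu}.$$
It then remains only to rewrite the two constants. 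For the second summand the key point is the identification $\|\mathcal{A}_1-\mathcal{A}_2\|_{\rm op}=\|Id_{\mathcal H}-\mathcal A\|_{\rm op}$; since the convention of Notation \ref{not} gives $\mathcal A=T_{\Psi}U_{\Psi_{0}^{ad}(\mathcal A)}$, this quantity is precisely the approximation rate $\varepsilon$ of the pair $(\Psi,\Psi_{0}^{ad}(\mathcal A))$, i.e. $\|Id_{\mathcal H}-T_{\Psi}U_{\Psi_{0}^{ad}(\mathcal A)}\|_{\rm op}=\varepsilon$. Substituting $\|\mathcal{A}_1-\mathcal{A}_2\|_{\rm op}=\varepsilon$ and placing both terms over the common denominator $\sqrt{m_{\Phi}}(\sqrt{m_{\Phi}}-\mu)$ then yields
$$\frac{2\mu\|\mathcal{A}_1\|_{\rm op}+\varepsilon\sqrt{m_{\Phi}}}{\sqrt{m_{\Phi}}(\sqrt{m_{\Phi}}-\mu)},$$
which is the asserted estimate.

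Because every analytic ingredient has already been carried out inside Proposition \ref{cad} (via Lemma \ref{dis} together with the norm bound $\|T_{\widetilde{\Phi}(0)}\|_{\rm op}\le 1/\sqrt{m_\Phi}$ and the gap estimate of Proposition \ref{per-1200}(3)), I do not expect a genuine obstacle here: the statement is a corollary and the work is purely a matter of reading off the correct constants. The only two points that demand care are, first, the identification of $\|Id_{\mathcal H}-\mathcal A\|_{\rm op}$ with the approximation rate $\varepsilon$ through the defining relation $\mathcal A=T_{\Psi}U_{\Psi_{0}^{ad}(\mathcal A)}$; and second, a sanity check on the hypothesis, namely that to keep the lower bound $(\sqrt{m_{\Phi}}-\mu)^2$ positive and to be entitled to invoke Proposition \ref{cad} one genuinely needs $\mu<\sqrt{m_{\Phi}}$, so the condition on $\mu$ should be read in that sharper form.
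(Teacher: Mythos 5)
Your approach is exactly the paper's: Proposition \ref{prop-dis} is presented there as an immediate consequence of Proposition \ref{cad} with no written proof, and specializing to ${\mathcal A}_1=Id_{\mathcal H}$, ${\mathcal A}_2={\mathcal A}$, using $\Phi_0^{ad}(Id_{\mathcal H})=\widetilde{\Phi}(0)$ and $\|Id_{\mathcal H}-{\mathcal A}\|_{\rm op}=\|Id_{\mathcal H}-T_\Psi U_{\Psi_0^{ad}({\mathcal A})}\|_{\rm op}\leq\varepsilon$, is the intended argument; your remark that the hypothesis must be read as $\mu<\sqrt{m_\Phi}$ (not $\mu<m_\Phi$) is also correct. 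One caveat: since $\|{\mathcal A}_1\|_{\rm op}=\|Id_{\mathcal H}\|_{\rm op}=1$, the specialization actually produces the numerator $2\mu+\varepsilon\sqrt{m_\Phi}$ rather than $2\mu\|{\mathcal A}\|_{\rm op}+\varepsilon\sqrt{m_\Phi}$; these coincide only when $\|{\mathcal A}\|_{\rm op}=1$, and for $\|{\mathcal A}\|_{\rm op}<1$ the bound you derive is strictly weaker than the one printed, so the factor $\|{\mathcal A}\|_{\rm op}$ in the statement should be regarded as an inaccuracy of the paper (it should be $1$) rather than something your substitution delivers --- your last step silently identifies $\|{\mathcal A}_1\|_{\rm op}$ with $\|{\mathcal A}\|_{\rm op}$, which it is not.
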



Next result shows that if $\Phi$ is a frame and $\Psi$ is a Bessel sequence in $\mathcal H$ which is a $\mu$-perturbation of $\Phi$, then for a given approximately dual
of $\Phi$ one can choose an approximate dual of $\Psi$ such that their frame norm is small
when the perturbation parameter is sufficiently small. Particularly, our choice of the approximately dual of the perturbed frame turns out
to be perfect in terms of best approximations with respect to the norm $\|\cdot\|_{\frak Fr}$.

\begin{theorem}\label{best-app}
Let $\Phi$ be a frame for $\mathcal H$ and let $\Psi$ be a $\mu$-perturbation of $\Phi$, with $\mu<\sqrt{m_{\Phi}}$. Then $\Psi$ is a frame for $\mathcal H$ with lower frame bound $(\sqrt{m_\Phi}-\mu)^2$ and for each $\Theta\in\verb"ran"_{B({\mathcal H},\ell^2)}(T_\Phi)$ and each operators
${\mathcal A}_1, {\mathcal A}_2\in B({\mathcal H})$ with $\|Id_{\mathcal H}-{\mathcal A}_i\|_{\rm op}<1$ {\rm(}$i=1, 2${\rm)},
the approximate dual $\Psi_{\Theta_{ba}}^{ad}({\mathcal A}_2)$ of $\Psi$ is a best approximation
of $\Phi_{\Theta}^{ad}({\mathcal A}_1)$ with respect to the norm $\|\cdot\|_{\frak Fr}$ and a $\lambda$-perturbation of $\Phi_{\Theta}^{ad}({\mathcal A}_1)$, where
$\Theta_{ba}=P_{\ker(T_\Psi)}U_{\Phi_{\Theta}^{ad}({\mathcal A}_1)}$ and
$$\lambda=\frac{\mu}{\sqrt{m_{\Phi}}-\mu}\Big(\frac{\|{\mathcal A}_1\|_{\rm op}}{\sqrt{m_{\Phi}}}+\|\Theta\|_{\rm op}+\frac{\|{\mathcal A}_1-{\mathcal A}_2\|_{\rm op}}{\mu}\Big).$$
\end{theorem}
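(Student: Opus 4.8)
The plan is to prove the two assertions—the best-approximation property and the $\lambda$-perturbation bound—separately, both resting on the orthogonal decomposition $\ell^2={\mathcal R}(U_\Psi)\oplus\ker(T_\Psi)$ and on Lemma \ref{dis}. The frame conclusion, together with the lower bound $(\sqrt{m_\Phi}-\mu)^2$, is immediate from part (3) of Proposition \ref{per-1200}. I would then record two elementary facts about the proposed $\Theta_{ba}=P_{\ker(T_\Psi)}U_{\Phi_{\Theta}^{ad}({\mathcal A}_1)}$. First, $T_\Psi\Theta_{ba}=T_\Psi P_{\ker(T_\Psi)}U_{\Phi_{\Theta}^{ad}({\mathcal A}_1)}=0$, so $\Theta_{ba}$ satisfies the admissibility constraint and $\Psi_{\Theta_{ba}}^{ad}({\mathcal A}_2)$ is a genuine approximate dual of $\Psi$. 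Second, passing to adjoints and using $U_X^*=T_X$ and $P_{\ker(T_\Psi)}^*=P_{\ker(T_\Psi)}$, one gets $\Theta_{ba}^*=T_{\Phi_{\Theta}^{ad}({\mathcal A}_1)}P_{\ker(T_\Psi)}$.

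For the best-approximation claim I would fix ${\mathcal A}_2$ and let $\Theta_2$ range over all operators with $T_\Psi\Theta_2=0$. Writing $D:=T_{\Phi_{\Theta}^{ad}({\mathcal A}_1)}$ and $E:=D-{\mathcal A}_2^*S_\Psi^{-1}T_\Psi$, the frame-norm distance to be minimized is $\|E-\Theta_2^*\|_{\rm op}$. The key observation is that the constraint $T_\Psi\Theta_2=0$ forces $\Theta_2^*$ to vanish on ${\mathcal R}(U_\Psi)=\ker(T_\Psi)^{\bot}$, so on ${\mathcal R}(U_\Psi)$ the difference $E-\Theta_2^*$ equals $E|_{{\mathcal R}(U_\Psi)}$ irrespective of $\Theta_2$; this produces the uniform lower bound $\|E-\Theta_2^*\|_{\rm op}\geq\|E\,P_{{\mathcal R}(U_\Psi)}\|_{\rm op}$. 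Since ${\mathcal A}_2^*S_\Psi^{-1}T_\Psi$ annihilates $\ker(T_\Psi)$, we have $E|_{\ker(T_\Psi)}=D|_{\ker(T_\Psi)}=\Theta_{ba}^*|_{\ker(T_\Psi)}$, whence $E-\Theta_{ba}^*=E\,P_{{\mathcal R}(U_\Psi)}$ attains that lower bound. Thus $\Theta_{ba}$ is a minimizer, which is exactly the best-approximation assertion.

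For the perturbation bound I would apply Lemma \ref{dis} with $\Theta_1=\Theta$ and $\Theta_2=\Theta_{ba}$. The whole point of the choice of $\Theta_{ba}$ is that the two middle summands then cancel: because $\Theta_{ba}^*=T_{\Phi_{\Theta}^{ad}({\mathcal A}_1)}P_{\ker(T_\Psi)}$, the terms $\Theta_{ba}^*-T_{\Phi_{\Theta}^{ad}({\mathcal A}_1)}P_{\ker(T_\Psi)}$ vanish, leaving
\begin{align*}
T_{\Psi_{\Theta_{ba}}^{ad}({\mathcal A}_2)}-T_{\Phi_{\Theta}^{ad}({\mathcal A}_1)}=T_{\Phi_{\Theta}^{ad}({\mathcal A}_1)}(U_\Phi-U_\Psi)T_{\widetilde{\Psi}(0)}-({\mathcal A}_1^*-{\mathcal A}_2^*)T_{\widetilde{\Psi}(0)}.
\end{align*}
Taking operator norms and using submultiplicativity, I would insert the estimates $\|U_\Phi-U_\Psi\|_{\rm op}=\|T_\Phi-T_\Psi\|_{\rm op}\leq\mu$, the bound $\|T_{\widetilde{\Psi}(0)}\|_{\rm op}=(m_\Psi^{\rm opt})^{-1/2}\leq(\sqrt{m_\Phi}-\mu)^{-1}$ coming from \cite[Proposition 5.4.4]{c} and the lower frame bound above, the estimate $\|T_{\Phi_{\Theta}^{ad}({\mathcal A}_1)}\|_{\rm op}=\|{\mathcal A}_1^*S_\Phi^{-1}T_\Phi+\Theta^*\|_{\rm op}\leq\|{\mathcal A}_1\|_{\rm op}m_\Phi^{-1/2}+\|\Theta\|_{\rm op}$, and the identity $\|{\mathcal A}_1^*-{\mathcal A}_2^*\|_{\rm op}=\|{\mathcal A}_1-{\mathcal A}_2\|_{\rm op}$. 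Collecting terms and factoring out $\mu/(\sqrt{m_\Phi}-\mu)$ returns precisely the stated value of $\lambda$.

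The only genuinely delicate step is the best-approximation argument: one must recognize that varying $\Theta_2$ can only change the synthesis operator on $\ker(T_\Psi)$ and never on ${\mathcal R}(U_\Psi)$, so that the minimal operator-norm distance is realized exactly by annihilating the $\ker(T_\Psi)$-component—which is the content of the formula for $\Theta_{ba}$. Once this is seen, the perturbation estimate is routine given Lemma \ref{dis}, and indeed the cancellation it produces is what makes the bound clean.
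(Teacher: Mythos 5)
Your proposal is correct and follows essentially the same route as the paper: the frame claim via Proposition \ref{per-1200}(3), the $\lambda$-bound via Lemma \ref{dis} with the cancellation produced by $\Theta_{ba}^*=T_{\Phi_{\Theta}^{ad}({\mathcal A}_1)}P_{\ker(T_\Psi)}$, and the best-approximation claim via the orthogonal splitting of $\ell^2$ into ${\mathcal R}(U_\Psi)\oplus\ker(T_\Psi)$ (the paper phrases this as a two-term decomposition of $T_{\Psi_{\Lambda}^{ad}({\mathcal A}_2)}-T_{\Phi_{\Theta}^{ad}({\mathcal A}_1)}$ whose $\ker(T_\Psi)$-part vanishes exactly for $\Lambda=\Theta_{ba}$, which is the same observation as your uniform-lower-bound-attained argument).
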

\begin{proof}
The fact that $\Psi$ is a frame for $\mathcal H$ follows directly from
part (3) of Proposition \ref{per-1200}. This part of theorem also yields that $\Psi$ has the lower frame bound $(\sqrt{m_{\Phi}}-\mu)^2$.
If now we apply lemma \ref{dis} for approximate duals $\Phi_{\Theta}^{ad}({\mathcal A}_1)$
and $\Psi_{\Theta_{ba}}^{ad}({\mathcal A}_2)$ of $\Phi$ and $\Psi$, respectively, we get
$$T_{\Psi_{\Theta_{ba}}^{ad}({\mathcal A}_2)}-T_{\Phi_{\Theta}^{ad}({\mathcal A}_1)}=T_{\Phi_{\Theta}^{ad}({\mathcal A}_1)}
(U_\Phi-U_\Psi)T_{\widetilde{\Psi}(0)}-({\mathcal A}_1^*-{\mathcal A}_2^*)T_{\widetilde{\Psi}(0)}.$$
From this, with inequality $\|T_{\Phi_{\Theta}^{ad}({\mathcal A}_1)}\|_{\rm op}\leq\frac{\|{\mathcal A}_1\|_{\rm op}}{\sqrt{m_{\Phi}}}+\|\Theta\|_{\rm op}$, we can deduce that
the approximate dual $\Psi_{\Theta_{ba}}^{ad}({\mathcal A}_2)$ of $\Psi$ is a
$\lambda$-perturbation of $\Phi_{\Theta}^{ad}({\mathcal A}_1)$.
In order to show that $\Psi_{\Theta_{ba}}^{ad}({\mathcal A}_2)$ is a best approximation
of $\Phi_{\Theta}^{ad}({\mathcal A}_1)$ with respect to the norm $\|\cdot\|_{\frak Fr}$, it suffices to prove that for all $\Lambda\in\verb"ran"_{B({\mathcal H},\ell^2)}(T_\Psi)$ we have
$$\Big\|\Psi_{\Theta_{ba}}^{ad}({\mathcal A}_2)-\Phi_{\Theta}^{ad}({\mathcal A}_1)\Big\|_{{\frak Fr}}\leq\Big\|\Psi_{\Lambda}^{ad}({\mathcal A}_2)-\Phi_{\Theta}^{ad}({\mathcal A}_1)\Big\|_{{\frak Fr}}.$$
To this end, we make use of the equalities $P_{{\mathcal R}(U_\Psi)}+P_{\ker(T_\Psi)}=Id_{\ell^2}$,
$$\Lambda^*P_{{\mathcal R}(U_{\Psi})}=0
\quad\quad\quad{\hbox{and}}\quad\quad\quad
T_{\widetilde{\Psi}(0)}P_{\ker(U_{\Psi})}=S_{\Psi}^{-1}T_{\Psi}P_{\ker(U_{\Psi})}=0$$
to find that
\begin{align*}
T_{\Psi_{\Lambda}^{ad}({\mathcal A}_2)}-T_{\Phi_{\Theta}^{ad}({\mathcal A}_1)}&=
({\mathcal A}_2^*T_{\widetilde{\Psi}(0)}+\Lambda^*)-T_{\Phi_{\Theta}^{ad}({\mathcal A}_1)}\\
&=({\mathcal A}_2^*T_{\widetilde{\Psi}(0)}-T_{\Phi_{\Theta}^{ad}({\mathcal A}_1)})P_{{\mathcal R}(U_{\Psi})}+(\Lambda^*-T_{\Phi_{\Theta}^{ad}({\mathcal A}_1)})P_{\ker(T_{\Psi})}.
\end{align*}
Specially, this equality for $\Theta_{ba}\in \verb"ran"_{B({\mathcal H},\ell^2)}(T_\Psi)$ reduce to the following equality
$$T_{\Psi_{\Theta_{ba}}^{ad}({\mathcal A}_2)}-T_{\Phi_{\Theta}^{ad}({\mathcal A}_1)}=({\mathcal A}_2^*T_{\widetilde{\Psi}(0)}-T_{\Phi_{\Theta}^{ad}({\mathcal A}_1)})P_{{\mathcal R}(U_{\Psi})};$$
This is because of, in this case we have
$$(\Theta_{ba}^*-T_{\Phi_{\Theta}^{ad}({\mathcal A}_1)})P_{\ker(T_{\Psi})}=0.$$
This together with the fact that ${\mathcal R}(U_{\Psi})=\ker(T_{\Psi})^\perp$ implies that
$$\|T_{\Psi_{\Theta_{ba}}^{ad}({\mathcal A}_2)}-T_{\Phi_{\Theta}^{ad}({\mathcal A}_1)}\|_{\rm op}\leq\|T_{\Psi_{\Lambda}^{ad}({\mathcal A}_2)}-T_{\Phi_{\Theta}^{ad}({\mathcal A}_1)}\|_{\rm op},$$
which proves the claim.
\end{proof}


\begin{note} 
For the rest of this paper, we shall use the letter $\Theta_{ba}$
exclusively to denote the operator defined in Theorem \ref{best-app}.
\end{note}


The following discussion together with part 2 of Remark \ref{ex1} below show that the operator $\Theta_{ba}$ is far from devoid of interest and it can have
a nice contribution to frame theory, see also the proof of Theorem \ref{H1157} below.

\begin{remark}
(1) Another perturbation condition of frames in Hilbert spaces is the compactness of the frame synthesis operator which has been investigated by Christensen and Heil \cite[Theorem 4.2]{heil}. In detail, they showed that if $\Phi$ is a frame for $\mathcal H$ and $\Psi$ is a sequence in $\mathcal H$ such that $T_\Phi-T_\Psi$ is compact, then $\Psi$ is a frame sequence. Suppose that either ${\mathcal A}_1={\mathcal A}_2$ or they are compacts.
With an argument similar to the proof of Theorem \ref{best-app} one can show that for each $\Theta\in\verb"ran"_{B({\mathcal H},\ell^2)}(T_\Phi)$
the Bessel sequence $\Psi_{\Theta_{ba}}^{ad}({\mathcal A}_2)$ is the only approximate dual of $\Psi$
such that $T_{\Phi_{\Theta}^{ad}({\mathcal A}_1)}-T_{\Psi_{\Theta_{ba}}^{ad}({\mathcal A}_2)}$ is compact which is also a best approximation
of $\Phi_{\Theta}^{ad}({\mathcal A}_1)$ in the norm space $({\frak Fr}({\mathcal H}),\|\cdot\|_{\frak Fr})$ as well. This is because of, the Banach space of all compact operators on $\mathcal H$ is an ideal of $B({\mathcal H})$.

(2) Theorem \ref{best-app} shows that the canonical approximately dual $\Psi_{0}^{ad}({\mathcal A}_2)$ of $\Psi$ is in general not a best approximation of the canonical dual $\Phi_{0}^{ad}({\mathcal A}_1)$ of $\Phi$.

(3) A special case of Theorem \ref{best-app} gives an explicit construction of the approximately dual frame which is the best approximation of the given alternate dual frame, see also part 2 of Remark \ref{ex1} below.
\end{remark}


It is notable to note that, by replacing $\mu$ with $\sqrt{\rm q}$, the results presented in above can be adapted for two
quadratically close sequences $\Phi$ and $\Psi$ with ${\rm q}<m_\Phi$ as new results; This is because of, in this case $\Psi$ is a $\sqrt{{\rm q}}$-perturbation of
$\Phi$. In the case when we lose the assumption ${\rm q}<m_\Phi$, Christensen
\cite{per2} showed that $\Psi$ is not a frame for the whole space $\mathcal H$ provided that $m_\Phi\leq{\rm q}<\infty$. Hence, in the next two results inspired by \cite{CHEN} we investigate c and d-quadratically close sequences
to formulate similar results as above for quadratically close sequences without imposed condition ${\rm q}<m_\Phi$. Particularly, Example \ref{exam} below shows that these investigations expand our results to the sequences which do not satisfy the Paley and
Wiener perturbation condition.

Since the logic of the proof of the next two results is the same as above, for the conciseness of the presentation we avoid the burden of proof.

\begin{theorem}\label{d-quad}
Let $\Phi$ be a frame for $\mathcal H$ and let $\Psi$ be a sequence in $\mathcal H$. Assume that ${\mathcal A}_1$ and ${\mathcal A}_2$ are operators in $B({\mathcal H})$ with $\|Id_{\mathcal H}-{\mathcal A}_i\|<1$ {\rm(}$i=1, 2${\rm)} and $\Lambda$ is an arbitrary operators in $\verb"ran"_{B({\mathcal H},\ell^2)}(T_\Phi)$. If $\Phi$ and $\Psi$ are d-quadratically close with ${\rm q}_{\Lambda}<1$, then for each $\Theta\in\verb"ran"_{B({\mathcal H},\ell^2)}(T_\Phi)$
the approximate dual $\Psi_{\Theta_{ba}}^{ad}({\mathcal A}_2)$ of $\Psi$ is a best approximation
of $\Phi_{\Theta}^{ad}({\mathcal A}_1)$ in ${\mathcal AD}(\Psi)$ and a $\rho$-perturbation of $\Phi_{\Theta}^{ad}({\mathcal A}_1)$, where
$$\rho=\frac{\sqrt{{\rm q} M_{\widetilde{\Phi}(\Lambda)}}}{1-{\rm q}_\Lambda}\Big(\frac{\|{\mathcal A}_1\|_{\rm op}}{\sqrt{m_{\Phi}}}+\|\Theta\|_{\rm op}+\frac{\|{\mathcal A}_1-{\mathcal A}_2\|_{\rm op}}{\sqrt{{\rm q}}}\Big).$$
Particularly, if
\begin{enumerate}
\item $\sqrt{m_\Phi M_{\widetilde{\Phi}(\Lambda)}}\leq1-{\rm q}_{\Lambda}$, then
\begin{eqnarray*}
\Big\|\Phi_{0}^{ad}({\mathcal A}_1)-\Psi_{0}^{ad}({\mathcal A}_2)\Big\|_{\frak Fr}\leq\frac{2\sqrt{{\rm q}}\|{\mathcal A}_1\|_{\rm op}+\sqrt{m_\Phi}\|{\mathcal A}_1-{\mathcal A}_2\|_{\rm op}}{m_\Phi}.
\end{eqnarray*}
\item  $\sqrt{m_\Phi M_{\widetilde{\Phi}(\Lambda)}}>1-{\rm q}_{\Lambda}$, then
\begin{eqnarray*}
\Big\|\Phi_{0}^{ad}({\mathcal A}_1)-\Psi_{0}^{ad}({\mathcal A}_2)\Big\|_{\frak Fr}\leq\frac{\sqrt{M_{\widetilde{\Phi}(\Lambda)}}}{1-{\rm q}_{\Lambda}}\;\left(2\|{\mathcal A}_1\|_{\rm op}\sqrt{\frac{{{\rm q}}}{{m_\Phi}}}+\|{\mathcal A}_1-{\mathcal A}_2\|_{\rm op}\right).
\end{eqnarray*}
\end{enumerate}
\end{theorem}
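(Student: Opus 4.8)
The plan is to follow the template of Proposition \ref{cad} and Theorem \ref{best-app}, replacing the $\mu$-perturbation hypothesis by the d-quadratically close one at exactly two points. First I would record the two estimates on which everything rests. By part (1) of Proposition \ref{per-1200} the sequence $\Psi$ is a frame with lower bound $M_{\widetilde{\Phi}(\Lambda)}^{-1}(1-{\rm q}_\Lambda)^2$, so that $\|T_{\widetilde{\Psi}(0)}\|_{\rm op}=\|U_{\widetilde{\Psi}(0)}\|_{\rm op}\leq\sqrt{M_{\widetilde{\Phi}(\Lambda)}}/(1-{\rm q}_\Lambda)$. Moreover, since $\Phi$ and $\Psi$ are quadratically close, the sequence $(\varphi_n-\psi_n)_n$ is Bessel with bound at most ${\rm q}$, whence $\|U_\Phi-U_\Psi\|_{\rm op}=\|T_\Phi-T_\Psi\|_{\rm op}\leq\sqrt{\rm q}$. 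These two inequalities are the only places where the new hypothesis is used; they play the roles that $\|T_{\widetilde{\Psi}(0)}\|_{\rm op}\leq(\sqrt{m_\Phi}-\mu)^{-1}$ and $\|T_\Phi-T_\Psi\|_{\rm op}\leq\mu$ played before.

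For the $\rho$-perturbation and best-approximation assertions I would repeat the argument of Theorem \ref{best-app} verbatim. Applying Lemma \ref{dis} with $\Theta_1=\Theta$ and $\Theta_2=\Theta_{ba}=P_{\ker(T_\Psi)}U_{\Phi_\Theta^{ad}(\mathcal{A}_1)}$, the combination $\Theta_{ba}^*-T_{\Phi_\Theta^{ad}(\mathcal{A}_1)}P_{\ker(T_\Psi)}$ vanishes, so that
$$T_{\Psi_{\Theta_{ba}}^{ad}(\mathcal{A}_2)}-T_{\Phi_\Theta^{ad}(\mathcal{A}_1)}=T_{\Phi_\Theta^{ad}(\mathcal{A}_1)}(U_\Phi-U_\Psi)T_{\widetilde{\Psi}(0)}-(\mathcal{A}_1^*-\mathcal{A}_2^*)T_{\widetilde{\Psi}(0)}.$$
Estimating the two summands by the triangle inequality, invoking $\|T_{\Phi_\Theta^{ad}(\mathcal{A}_1)}\|_{\rm op}\leq\|\mathcal{A}_1\|_{\rm op}/\sqrt{m_\Phi}+\|\Theta\|_{\rm op}$ together with the two estimates of the first step, and then factoring out $\sqrt{{\rm q}M_{\widetilde{\Phi}(\Lambda)}}/(1-{\rm q}_\Lambda)$ produces exactly the stated $\rho$. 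The best-approximation claim is word-for-word the one in Theorem \ref{best-app}: writing $P_{\mathcal{R}(U_\Psi)}+P_{\ker(T_\Psi)}=Id_{\ell^2}$ and using $\Lambda^*P_{\mathcal{R}(U_\Psi)}=0$ and $T_{\widetilde{\Psi}(0)}P_{\ker(T_\Psi)}=0$ splits $T_{\Psi_\Lambda^{ad}(\mathcal{A}_2)}-T_{\Phi_\Theta^{ad}(\mathcal{A}_1)}$ into a part supported on $\mathcal{R}(U_\Psi)$ that is independent of $\Lambda$ and a part supported on $\ker(T_\Psi)$ that is annihilated precisely by the choice $\Lambda=\Theta_{ba}$; since the two supports are orthogonal, $\Theta_{ba}$ minimizes the $\|\cdot\|_{\frak Fr}$-distance.

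For the two displayed special cases I would set $\Theta=0$ and imitate the proof of Proposition \ref{cad}. Using $T_{\widetilde{\Phi}(0)}P_{\ker(T_\Phi)}=0$ the governing identity becomes
$$T_{\Psi_0^{ad}(\mathcal{A}_2)}-T_{\Phi_0^{ad}(\mathcal{A}_1)}=\mathcal{A}_1^*T_{\widetilde{\Phi}(0)}(U_\Phi-U_\Psi)T_{\widetilde{\Psi}(0)}-\mathcal{A}_1^*T_{\widetilde{\Phi}(0)}(P_{\ker(T_\Phi)}-P_{\ker(T_\Psi)})-(\mathcal{A}_1^*-\mathcal{A}_2^*)T_{\widetilde{\Psi}(0)},$$
and I would bound its three summands using $\|T_{\widetilde{\Phi}(0)}\|_{\rm op}\leq 1/\sqrt{m_\Phi}$, the identity $\Delta(\ker(T_\Phi),\ker(T_\Psi))=\Delta(\mathcal{R}(U_\Phi),\mathcal{R}(U_\Psi))$, and the first-step estimates, before inserting the two gap bounds supplied by parts (1)(a) and (1)(b) of Proposition \ref{per-1200}. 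In case (1) the additional observation is that the hypothesis $\sqrt{m_\Phi M_{\widetilde{\Phi}(\Lambda)}}\leq1-{\rm q}_\Lambda$ is equivalent to $\sqrt{M_{\widetilde{\Phi}(\Lambda)}}/(1-{\rm q}_\Lambda)\leq1/\sqrt{m_\Phi}$, which lets one replace $\|T_{\widetilde{\Psi}(0)}\|_{\rm op}$ by $1/\sqrt{m_\Phi}$ and collapse the estimate to $m_\Phi^{-1}(2\sqrt{\rm q}\|\mathcal{A}_1\|_{\rm op}+\sqrt{m_\Phi}\|\mathcal{A}_1-\mathcal{A}_2\|_{\rm op})$; in case (2) one keeps the factor $\sqrt{M_{\widetilde{\Phi}(\Lambda)}}/(1-{\rm q}_\Lambda)$ throughout. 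The one thing to watch is the bookkeeping: in each of the three summands, and in each of the two cases, one must select the sharper of the two available bounds for the analysis-operator norm and for the gap so that the constants line up with the claimed $\rho$ and the two inequalities. No idea beyond those already present in Proposition \ref{cad} and Theorem \ref{best-app} is required, which is exactly why the authors could suppress the computation.
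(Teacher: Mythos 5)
Your proposal is correct and is precisely the argument the paper intends: the authors omit the proof with the remark that ``the logic of the proof \ldots is the same as above,'' and you execute that template faithfully, substituting $\sqrt{\rm q}$ for $\mu$ and $\sqrt{M_{\widetilde{\Phi}(\Lambda)}}/(1-{\rm q}_\Lambda)$ for $(\sqrt{m_\Phi}-\mu)^{-1}$ at exactly the right places, with the case split on $\sqrt{m_\Phi M_{\widetilde{\Phi}(\Lambda)}}$ versus $1-{\rm q}_\Lambda$ correctly matched to the two gap bounds of Proposition \ref{per-1200}(1). The constants you derive reproduce $\rho$ and both displayed inequalities exactly.
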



Here it should be noted that an interesting version of the following result can be obtain whenever in it the operators $\Lambda$ and $\Theta$ are equal.
Now, let us state Theorem \ref{d-quad} for the case of c-quadratically close sequences.

\begin{theorem}\label{c-quad}
Let $\Phi$ be a frame for $\mathcal H$ and let $\Psi$ be a sequence in $\mathcal H$. Assume that ${\mathcal A}_1$ and ${\mathcal A}_2$ are operators in $B({\mathcal H})$ with $\|Id_{\mathcal H}-{\mathcal A}_i\|<1$ {\rm(}$i=1, 2${\rm)}. If $\Phi$ and $\Psi$ are d-quadratically close with ${\rm q}_{\Lambda}<1$, then for each $\Theta\in\verb"ran"_{B({\mathcal H},\ell^2)}(T_\Phi)$
the approximate dual $\Psi_{\Theta_{ba}}^{ad}({\mathcal A}_2)$ of $\Psi$ is a best approximation
of $\Phi_{\Theta}^{ad}({\mathcal A}_1)$ in ${\mathcal AD}(\Psi)$ and a $\upsilon$-perturbation of $\Phi_{\Theta}^{ad}({\mathcal A}_1)$, where
$\Theta_{ba}=P_{\ker(T_\Psi)}U_{\Phi_{\Theta}^{ad}({\mathcal A}_1)}$ and
$$\upsilon=\frac{\sqrt{{\rm q}}}{\sqrt{m_\Phi}(1-{\rm q}_0)}\Big(\frac{\|{\mathcal A}_1\|_{\rm op}}{\sqrt{m_{\Phi}}}+\|\Theta\|_{\rm op}+\frac{\|{\mathcal A}_1-{\mathcal A}_2\|_{\rm op}}{\sqrt{{\rm q}}}\Big).$$
Particularly, the deviation of the canonical approximately dual of
original and perturbed sequence can be estimated by
\begin{eqnarray*}
\Big\|\Phi_{0}^{ad}({\mathcal A}_1)-\Psi_{0}^{ad}({\mathcal A}_2)\Big\|_{\frak Fr}\leq\frac{1}{\sqrt{m_\Phi}(1-{\rm q}_{0})}\;\left(2\|{\mathcal A}_1\|_{\rm op}\sqrt{\frac{{{\rm q}}}{{m_\Phi}}}+\|{\mathcal A}_1-{\mathcal A}_2\|_{\rm op}\right).
\end{eqnarray*}
\end{theorem}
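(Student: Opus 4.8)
The plan is to mirror the proof of Theorem \ref{best-app}, with the role of the perturbation parameter $\mu$ now played by $\sqrt{\rm q}$ and with the frame-theoretic constants of $\Psi$ supplied by the c-quadratically close hypothesis rather than by a $\mu$-perturbation estimate. First I would invoke part (2) of Proposition \ref{per-1200}: since $\Phi$ and $\Psi$ are c-quadratically close with ${\rm q}_0<1$, the sequence $\Psi$ is a frame with lower bound $m_\Phi(1-{\rm q}_0)^2$, whence $\|T_{\widetilde{\Psi}(0)}\|_{\rm op}=\|U_{\widetilde{\Psi}(0)}\|_{\rm op}\leq\big(\sqrt{m_\Phi}(1-{\rm q}_0)\big)^{-1}$. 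Together with the elementary Cauchy--Schwarz bound $\|U_\Phi-U_\Psi\|_{\rm op}=\|T_\Phi-T_\Psi\|_{\rm op}\leq\sqrt{\rm q}$, these are the two ingredients that replace the $\mu$-estimates used in Theorem \ref{best-app}.

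Next I would apply Lemma \ref{dis} to the approximate duals $\Phi_{\Theta}^{ad}({\mathcal A}_1)$ and $\Psi_{\Theta_{ba}}^{ad}({\mathcal A}_2)$. The decisive point is that the choice $\Theta_{ba}=P_{\ker(T_\Psi)}U_{\Phi_{\Theta}^{ad}({\mathcal A}_1)}$ gives $\Theta_{ba}^*=T_{\Phi_{\Theta}^{ad}({\mathcal A}_1)}P_{\ker(T_\Psi)}$, so the term $\Theta_{ba}^*-T_{\Phi_{\Theta}^{ad}({\mathcal A}_1)}P_{\ker(T_\Psi)}$ appearing in Lemma \ref{dis} vanishes and one is left with
$$T_{\Psi_{\Theta_{ba}}^{ad}({\mathcal A}_2)}-T_{\Phi_{\Theta}^{ad}({\mathcal A}_1)}=T_{\Phi_{\Theta}^{ad}({\mathcal A}_1)}(U_\Phi-U_\Psi)T_{\widetilde{\Psi}(0)}-({\mathcal A}_1^*-{\mathcal A}_2^*)T_{\widetilde{\Psi}(0)}.$$
Taking operator norms, using $\|T_{\Phi_{\Theta}^{ad}({\mathcal A}_1)}\|_{\rm op}\leq\|{\mathcal A}_1\|_{\rm op}/\sqrt{m_\Phi}+\|\Theta\|_{\rm op}$ together with the two bounds above, and factoring out $\sqrt{\rm q}\big(\sqrt{m_\Phi}(1-{\rm q}_0)\big)^{-1}$, yields precisely the constant $\upsilon$, so $\Psi_{\Theta_{ba}}^{ad}({\mathcal A}_2)$ is a $\upsilon$-perturbation of $\Phi_{\Theta}^{ad}({\mathcal A}_1)$.

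For the particular estimate I would specialize to $\Theta=0$, writing $T_{\Phi_{0}^{ad}({\mathcal A}_1)}={\mathcal A}_1^*T_{\widetilde{\Phi}(0)}$ and inserting $T_{\widetilde{\Phi}(0)}P_{\ker(T_\Phi)}=0$ exactly as in the proof of Proposition \ref{cad}; this converts the $P_{\ker(T_\Psi)}$ contribution into ${\mathcal A}_1^*T_{\widetilde{\Phi}(0)}\big(P_{\ker(T_\Phi)}-P_{\ker(T_\Psi)}\big)$, whose norm is controlled by $\Delta(\ker(T_\Phi),\ker(T_\Psi))=\Delta({\mathcal R}(U_\Phi),{\mathcal R}(U_\Psi))\leq\frac{1}{1-{\rm q}_0}\sqrt{{\rm q}/m_\Phi}$ from Proposition \ref{per-1200}(2). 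Adding the three resulting terms and factoring out $\big(\sqrt{m_\Phi}(1-{\rm q}_0)\big)^{-1}$ reproduces the stated bound on $\big\|\Phi_{0}^{ad}({\mathcal A}_1)-\Psi_{0}^{ad}({\mathcal A}_2)\big\|_{\frak Fr}$.

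Finally, the best-approximation claim is argued exactly as in Theorem \ref{best-app}: for an arbitrary $\Lambda\in\verb"ran"_{B({\mathcal H},\ell^2)}(T_\Psi)$ one splits $T_{\Psi_{\Lambda}^{ad}({\mathcal A}_2)}-T_{\Phi_{\Theta}^{ad}({\mathcal A}_1)}$ along the orthogonal decomposition $Id_{\ell^2}=P_{{\mathcal R}(U_\Psi)}+P_{\ker(T_\Psi)}$, and observes that the defining property of $\Theta_{ba}$ annihilates the $\ker(T_\Psi)$-component; since ${\mathcal R}(U_\Psi)=\ker(T_\Psi)^\perp$, reinstating a summand orthogonal to the retained one can only increase the norm, giving minimality of $\Theta_{ba}$ in ${\mathcal AD}(\Psi)$. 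I expect the only genuinely substantive point to be the first one: because c-quadratic closeness does not force ${\rm q}<m_\Phi$, the $\mu$-perturbation frame estimate is unavailable, and one must lean on Proposition \ref{per-1200}(2) both to produce the lower frame bound of $\Psi$ (controlling $\|T_{\widetilde{\Psi}(0)}\|_{\rm op}$) and to produce the gap estimate $\Delta({\mathcal R}(U_\Phi),{\mathcal R}(U_\Psi))$; once these are secured, the remaining steps are the same operator-norm bookkeeping as before.
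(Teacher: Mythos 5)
Your proposal is correct and follows exactly the route the paper intends: the authors omit the proof of this theorem, stating that its logic is the same as that of Theorem \ref{best-app} and Proposition \ref{cad}, with the $\mu$-perturbation estimates replaced by $\|T_\Phi-T_\Psi\|_{\rm op}\leq\sqrt{\rm q}$ and the frame bound and gap estimate from Proposition \ref{per-1200}(2). Your bookkeeping reproduces both stated constants, and your reading of the hypothesis as c-quadratic closeness with ${\rm q}_0<1$ (despite the statement's apparent typo saying ``d-quadratically close with ${\rm q}_\Lambda<1$'') is the intended one.
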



In the following we will construct an example for which Theorem \ref{c-quad} works while Theorem \ref{best-app} does not. Here it should be noted that it is inspired by \cite[Example 2.5]{CHEN}.

\begin{example}\label{exam}
Let for each $n\in{\Bbb N}$, $\alpha_n=2^n$ and $k_n=4^n$ and let $t_1=3$ and $t_n=2$ for all $n\geq 2$. suppose also that $\Phi$ and $\Psi$ are the following sequences
$$\Phi=\Big(\;\underbrace{\frac{1}{\alpha_n}e_n,\frac{1}{\alpha_n}e_n\cdots,
\frac{1}{\alpha_n}e_n}_{k_n}\;\Big)_n,$$
and
$$\Psi=\Big(\;\underbrace{\frac{t_n}{\alpha_n}e_n,\frac{1}{\alpha_n}e_n\cdots,
\frac{1}{\alpha_n}e_n}_{k_n}\;\Big)_n.$$
It is now not hard to check that $\Phi$ is a tight frame with bounds $m_\Phi=M_\Phi=1$ and thus $S_\Phi=Id_{\mathcal H}$. Moreover, one can easily seen that
$\Psi$ is a frame with bounds $m_\Psi=1$ and $M_\Psi=3$. But, we observe that
$$\|T_\Phi-T_\Psi\|_{\rm op}\geq\|(T_\Phi-T_\Psi)\delta_1\|=\frac{(t_1-1)}{\alpha_1}=1,$$
and
$${\rm q}=\sum_{n=1}^\infty\|\varphi_n-\psi_n\|^2=\sum_{n=1}^\infty\frac{(t_n-1)^2}{\alpha^2}
=1+\sum_{n=2}^\infty\frac{1}{4^n}=\frac{13}{12}>1.$$
Hence, $\Phi$ and $\Psi$ neither quadratically close with ${\rm q}<m_\Phi$ nor $\mu$-perturbation with $\mu<m_\Phi$.
Thus Theorem \ref{best-app} does not work for $\Phi$ and $\Psi$ whereas Theorem \ref{c-quad} works for them. This is because of, we have
$${\rm q}_0=\sum_{n=1}^\infty\|\varphi_n-\psi_n\|\|\pi_n(\widetilde{\Phi}(0))\|
=\sum_{n=1}^\infty\frac{(t_n-1)}{\alpha}\times\frac{1}{\alpha_n}
=\frac{1}{2}+\sum_{n=2}^\infty\frac{1}{4^n}=\frac{7}{12}<1.$$

\end{example}


In the following two result we are going to construct frames from a
given frame and characterize their approximately dual frames.

\begin{theorem}\label{H1157}
Let $\Phi$ be a frame for $\mathcal H$ and let $\Psi$ be a $\mu$-perturbation of $\Phi$ such that $\mu<\sqrt{m_{\Phi}}/2$. Then then there exists a one-to-one correspondence between ${\mathcal AD}(\Phi)$ and ${\mathcal AD}(\Psi)$.
\end{theorem}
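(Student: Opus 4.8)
The plan is to reduce the statement to a bijection between the parameter sets that label the two families of approximately dual frames. By the characterization (\ref{0720}) recalled in Notation \ref{not}, every member of $\mathcal{AD}(\Phi)$ has the form $\Phi_\Theta^{ad}(\mathcal{A})$ for some $\mathcal{A}\in B(\mathcal{H})$ with $\|Id_{\mathcal H}-\mathcal{A}\|_{\rm op}<1$ and some $\Theta$ with $T_\Phi\Theta=0$, and this labeling is faithful: applying $T_\Phi U_{(\cdot)}$ to the frame recovers $\mathcal{A}$ (since $T_\Phi U_{\Phi_\Theta^{ad}(\mathcal{A})}=\mathcal{A}$), after which $\Theta=U_{\Phi_\Theta^{ad}(\mathcal{A})}-U_\Phi S_\Phi^{-1}\mathcal{A}$ is determined as well. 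Thus $(\mathcal{A},\Theta)\mapsto\Phi_\Theta^{ad}(\mathcal{A})$ is a bijection of $\{(\mathcal{A},\Theta):\|Id_{\mathcal H}-\mathcal{A}\|_{\rm op}<1,\ T_\Phi\Theta=0\}$ onto $\mathcal{AD}(\Phi)$, and likewise for $\Psi$. Since the operator part $\mathcal{A}$ ranges over the same set for both frames, it remains to biject $\{\Theta:T_\Phi\Theta=0\}$ with $\{\Xi:T_\Psi\Xi=0\}$.

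The geometric input comes from the perturbation bound. By part (3) of Proposition \ref{per-1200}, $\Psi$ is a frame with lower bound $(\sqrt{m_\Phi}-\mu)^2$ and $\Delta(\mathcal{R}(U_\Phi),\mathcal{R}(U_\Psi))\leq\mu/(\sqrt{m_\Phi}-\mu)$. The hypothesis $\mu<\sqrt{m_\Phi}/2$ is exactly what forces this quantity to be strictly less than $1$. Because $\ker(T_\Phi)=\mathcal{R}(U_\Phi)^\perp$, $\ker(T_\Psi)=\mathcal{R}(U_\Psi)^\perp$ and $\Delta(\ker(T_\Phi),\ker(T_\Psi))=\Delta(\mathcal{R}(U_\Phi),\mathcal{R}(U_\Psi))$ (as in the proof of Proposition \ref{cad}), we obtain $\Delta(\ker(T_\Phi),\ker(T_\Psi))<1$. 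Consequently, by the gap fact recalled in Section 2, the restricted projection $W:=P_{\ker(T_\Psi)}|_{\ker(T_\Phi)}$ is a topological isomorphism of $\ker(T_\Phi)$ onto $\ker(T_\Psi)$.

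I would then build the correspondence through the best-approximation operator $\Theta_{ba}$ of Theorem \ref{best-app}, taking $\mathcal{A}_1=\mathcal{A}_2=\mathcal{A}$. Define $F:\mathcal{AD}(\Phi)\to\mathcal{AD}(\Psi)$ by $F\big(\Phi_\Theta^{ad}(\mathcal{A})\big):=\Psi_{\Theta_{ba}}^{ad}(\mathcal{A})$ with $\Theta_{ba}=P_{\ker(T_\Psi)}U_{\Phi_\Theta^{ad}(\mathcal{A})}$. Using $U_{\Phi_\Theta^{ad}(\mathcal{A})}=U_\Phi S_\Phi^{-1}\mathcal{A}+\Theta$ and viewing $\Theta$ as a map into $\ker(T_\Phi)$, one has $\Theta_{ba}=c_{\mathcal{A}}+W\Theta$, where $c_{\mathcal{A}}:=P_{\ker(T_\Psi)}U_\Phi S_\Phi^{-1}\mathcal{A}$ is fixed once $\mathcal{A}$ is fixed. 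Since $W$ is an isomorphism, $\Theta\mapsto W\Theta$ (composition with $W$) is a linear bijection of $\{\Theta:T_\Phi\Theta=0\}$ onto $\{\Xi:T_\Psi\Xi=0\}$ with inverse $\Xi\mapsto W^{-1}\Xi$, and adding the fixed operator $c_{\mathcal{A}}$ preserves bijectivity. Hence $F$ is a bijection for each fixed $\mathcal{A}$, and therefore a one-to-one correspondence between $\mathcal{AD}(\Phi)$ and $\mathcal{AD}(\Psi)$.

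The main obstacle is the passage to the isomorphism $W$: this is precisely where the sharpened hypothesis $\mu<\sqrt{m_\Phi}/2$ (rather than merely $\mu<\sqrt{m_\Phi}$) is indispensable, since it is the exact bound that pushes $\Delta(\ker(T_\Phi),\ker(T_\Psi))$ below $1$ and thereby activates the gap theory recalled in Section 2. A secondary but essential point is to verify carefully that the $(\mathcal{A},\Theta)$-labeling is genuinely injective, so that a bijection of parameters really does yield a bijection of approximately dual frames; the remaining steps are routine operator identities.
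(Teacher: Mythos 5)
Your proof is correct and takes essentially the same route as the paper's: the same map $\Phi_\Theta^{ad}(\mathcal{A})\mapsto\Psi_{\Theta_{ba}}^{ad}(\mathcal{A})$, the same use of $\mu<\sqrt{m_\Phi}/2$ to force $\Delta(\ker(T_\Phi),\ker(T_\Psi))<1$, and the same isomorphism $P_{\ker(T_\Psi)}|_{\ker(T_\Phi)}$ acting on the $\Theta$-parameter. Your packaging of the correspondence as the affine bijection $\Theta\mapsto c_{\mathcal{A}}+W\Theta$ is just a tidier version of the paper's separate injectivity and surjectivity checks, whose explicit preimage formula is exactly your $W^{-1}(\Lambda-c_{\mathcal{A}})$.
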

\begin{proof}
In order to achieve the proof of theorem we show that the map
$$\Gamma:{\mathcal AD}(\Phi)\rightarrow{\mathcal AD}(\Psi);\quad
\Phi^{ad}_{\Theta}({\mathcal A})\mapsto\Psi^{ad}_{\Theta_{ba}}({\mathcal A})$$
is bijective. To this end, first suppose that $\Gamma(\Phi^{ad}_{\Theta}({\mathcal A}))=\Gamma(\Phi^{ad}_{\Theta'}({\mathcal A}'))$. Hence, we have
\begin{equation}\label{1257}
\Psi^{ad}_{\Theta_{ba}}({\mathcal A})=\Psi^{ad}_{\Theta'_{ba}}({\mathcal A}').
\end{equation}
It follows that $U_{\Psi^{ad}_{\Theta_{ba}}({\mathcal A})}=U_{\Psi^{ad}_{\Theta'_{ba}}({\mathcal A}')}$ and thus we have
$$U_{\widetilde{\Psi}(0)}{\mathcal A}+P_{\ker(T_{\Psi})}U_{\Phi^{ad}_{\Theta}({\mathcal A})}=U_{\widetilde{\Psi}(0)}{\mathcal A}'+P_{\ker(T_{\Psi})}U_{\Phi^{ad}_{\Theta'}({\mathcal A}')}.$$
Now, using the duality relation between $\Psi$ and $\widetilde{\Psi}$ we get ${\mathcal A}={\mathcal A}'$. From this, by Eq. (\ref{1257}), we deduce that
$P_{\ker(T_{\Psi})}U_{\Phi^{ad}_{\Theta}({\mathcal A})}=P_{\ker(T_{\Psi})}U_{\Phi^{ad}_{\Theta'}({\mathcal A})}$. Hence, we get $P_{\ker(T_{\Psi})}\Theta=P_{\ker(T_{\Psi})}\Theta'$. We now invoke the equalities
$$P_{\ker(T_{\Phi})}\Theta=\Theta\quad\quad\quad{\hbox{and}}\quad\quad\quad P_{\ker(T_{\Phi})}\Theta'=\Theta'$$
to conclude that
\begin{align*}
(P_{\ker(T_{\Psi})}|_{\ker(T_{\Phi})})\Theta\delta_n
&=P_{\ker(T_{\Psi})}\Theta\delta_n\\
&=P_{\ker(T_{\Psi})}\Theta'\delta_n\\
&=(P_{\ker(T_{\Psi})}|_{\ker(T_{\Phi})})\Theta'\delta_n,
\end{align*}
for all $n\in{\Bbb N}$. On the other hand, by part 3 of Proposition \ref{per-1200}, we observe that $\Delta(\ker(T_\Phi),\ker(T_\Psi))<1$ and thus the operator $P_{\ker(T_{\Psi})}|_{\ker(T_{\Phi})}$ is isomorphism, by what was mention in Section 2. It follows that $\Theta=\Theta'$ and therefore the map $\Gamma$ is injective. It remains to show that $\Gamma$ is surjective. For this, suppose that $\Psi^{ad}_{\Lambda}({\mathcal A})$ is an arbitrary approximate dual of $\Psi$. If we set
$$\Theta:=P_{\ker(T_{\Phi})}(P_{\ker(T_{\Psi})}|_{\ker(T_{\Phi})})^{-1}
(\Lambda-P_{\ker(T_{\Psi})}U_{\widetilde{\Phi}(0)}{\mathcal A}),$$
then we observe that $\Gamma(\Theta)=\Lambda$ and thus
$$\Gamma(\Phi^{ad}_{\Theta}({\mathcal A}))=\Psi^{ad}_{\Gamma(\Theta)}({\mathcal A})$$
which implies that $\Gamma$ is surjective.
\end{proof}


\section{Application to Gabor frames}

Recall that a Gabor frame is a frame for $L^2({\Bbb R})$ of the
form ${\mathcal G}:=(E_{mb}T_{na}g)_{m,n\in{\Bbb Z}}$, where $a, b>0$,
$g\in L^2({\Bbb R})$, $T_{na}f(x)=f(x-na)$ and $E_{mb}f(x)=e^{2\pi
imbx}f(x)$ for all $f\in L^2({\Bbb R})$. In view of \cite[Theorem
11.3.1]{c}, the sequence ${\mathcal G}$  can only be a frame if
$ab\leq 1$, but it is not a sufficient condition. Another necessary condition can be expressed in terms of the boundedness of the function
$G(x):=\sum_{n\in{\Bbb Z}}|g(x-na)|^2$. More precisely, if $\mathcal G$ is a Bessel sequence with bound $M_{\mathcal G}$, then
\begin{equation}\label{Gelar0714}
G(x)\leq bM_{\mathcal G}\quad\quad a.e.~x\in{\Bbb R},
\end{equation}
see Proposition 11.3.4 of \cite{c}. Now, let us to recall a sufficient condition from \cite[Theorem 11.4.2]{c}. If $g\in L^2({\Bbb R})$, $a, b>0$ and suppose that
$$m_{\mathcal G}:=\frac{1}{b}\sup_{x\in[0,a]}\sum_{k\in{\Bbb Z}}\left|\sum_{n\in{\Bbb Z}}g(x-na)\overline{g(x-na-k/b)}\right|<\infty,$$
and
$$M_{\mathcal G}:=\frac{1}{b}\inf_{x\in [0,a]}\left[\sum_{n\in{\Bbb Z}}|g(x-na)|^2-\sum_{k\neq 0}\Big|\sum_{n\in{\Bbb Z}}g(x-na)\overline{g(x-na-k/b)}\Big|\right]>0,$$
then $\mathcal G$ is a frame for $L^2({\Bbb R})$ with bounds $m_{\mathcal G}, M_{\mathcal G}$.

Recall also from \cite{c} that if
$\mathcal G$ is a frame and $ab<1$, then there exists infinitely many
$g^d$ in $L^2({\Bbb R})$ such that we have the following
reconstruction formula for each $f\in L^2({\Bbb R})$
$$f=\sum_{m,n\in{\Bbb Z}}\big<f,E_{mb}T_{na}g^d\big>E_{mb}T_{na}g;$$
that is, the Gabor frames ${\mathcal G}$ and ${\mathcal G}^d=(E_{mb}T_{na}g^d)_{m,n\in{\Bbb Z}}$ are dual Gabor frames.
But the
standard choice of $g^d$ is $S_{\mathcal G}^{-1}g$, where
$$S_{\mathcal G}:L^2({\Bbb R})\rightarrow L^2({\Bbb R});\quad f\mapsto\sum_{m,n\in{\Bbb Z}}\big<f,E_{mb}T_{na}g\big>E_{mb}T_{na}g$$ is the frame operator of
$\mathcal G$. It is worth mentioning
that if the operator ${\mathcal
A}\in B(L^2({\Bbb R}))$ commutes with $E_{\pm b}$ and $T_{\pm a}$,
then ${\mathcal A}$ and its adjoint commute with $E_{mb}$ and $T_{na}$
for all $m, n\in{\Bbb Z}$. In particular, Lemma 12.3.1 of \cite{c}
guarantees that there are infinitely many operators $\mathcal A$ on
$L^2({\Bbb R})$ which commute with $E_{\pm b}$, $T_{\pm a}$ and
$\|Id_{L^2({\Bbb R})}-{\mathcal A}\|<1$. For example, it is sufficient to
set $\mathcal A$ equal to an appropriate scalar multiple of the
frame operator of $\mathcal G$ or ${\mathcal A}:=T_{{\mathcal G}}U_{{\mathcal G}^{ad}}$. Moreover, we would like to recall from \cite[Proposition 12.3.6]{c} that if $g\in L^2({\Bbb R})$, $a, b>0$ are given and $\mathcal G$ is a frame for $L^2({\Bbb R})$, then a Gabor system ${\mathcal G}^d=(E_{mb}T_{na}g^d)_{m,n\in{\Bbb Z}}$ is a dual frame if and only if the function $g^d$ has the form
$$
g^d=S_{\mathcal G}^{-1}g+h-\sum_{m,n\in{\Bbb Z}}\big<S_{\mathcal G}^{-1}g,E_{mb}T_{na}g\big>E_{mb}T_{na}h
$$
for some function $f\in L^2({\Bbb R})$ for which $(E_{mb}T_{na}h)_{m,n\in{\Bbb Z}}$ is a Bessel sequence.


The following remark will be needed in the sequel.

\begin{remark}\label{ex1}
Let ${\mathcal G}=(E_{mb}T_{na}g)_{m,n\in{\Bbb Z}}$ and ${\mathcal G}^d=(E_{mb}T_{na}g^d)_{m,n\in{\Bbb
Z}}$ be dual Gabor frames.
\begin{enumerate}
\item An argument similar to the proof of \cite[Proposition 12.3.6]{c} with the aid of
\cite[Theorem 2.1]{japp} shows that
the Gabor system ${\mathcal G}^{ad}=(E_{mb}T_{na}g^{ad})_{m,n\in{\Bbb Z}}$ is an approximately dual frame if and only if the function $g^{ad}$ has the form
\begin{equation}\label{adgabor}
g^{ad}={\mathcal A}^*S_{\mathcal G}^{-1}g+h-\sum_{m,n\in{\Bbb Z}}\big<S_{\mathcal G}^{-1}g,E_{mb}T_{na}g\big>E_{mb}T_{na}h
\end{equation}
for some function $f\in L^2({\Bbb R})$ for which $(E_{mb}T_{na}h)_{m,n\in{\Bbb Z}}$ is a Bessel sequence and an operator $\mathcal A\in B(L^2({\Bbb R}))$ which commutes with $E_{\pm b}$ and $T_{\pm a}$
and $\|Id_{L^2({\Bbb R})}-{\mathcal A}\|<1$.
If in Eq. (\ref{adgabor}) we set $h=S_{\mathcal G}g^d$, then we obtain the following generator of an approximately dual Gabor frame of $\mathcal G$
$$g^{ad}={\mathcal A}^* S_{\mathcal G}^{-1}g-g+S_{\mathcal G}(g^d),$$
which is very applicable for constructing of approximately dual Gabor frames with a desired approximation rate, see \cite[Section 3]{japp}.
\item An argument similar to the proofs of \cite[Lemma 6.3.6 and Theorem 6.3.7]{c} and \cite[Theorem 2.1]{japp} implies that if $\Phi^{ad}$ is an approximately dual of $\Phi$, then
there exists an operator ${\mathcal A}\in B({\mathcal H})$ with $\|Id_{\mathcal H}-{\mathcal A}\|_{\rm op}<1$ and
$W\in B(\ell^2,{\mathcal H})$ such that for operator $V:=W(Id_{\ell^2}-U_\Phi S^{-1}_\Phi T_{\Phi})$ we have
\begin{align}\label{DBessel}
\varphi_n^{ad}&={\mathcal A}^*S_\Phi^{-1}{\varphi}_n+V\delta_n\nonumber\\
&={\mathcal A}^*S_\Phi^{-1}{\varphi}_n+W\delta_n-\sum_{j=1}^\infty\big<S_\Phi^{-1}{\varphi}_n,
\varphi_j\big>W\delta_j.
\end{align}
Particularly, if we set $\Omega=(\omega_n:=W\delta_n)_n$, then $T_\Omega=W$ and thus the boundedness of $W$ implies that $\Omega$ is a Bessel sequence. Hence, this characterization of approximately duals of a given frame $\Phi$ says that there exists correspondence between Bessel sequences in $\mathcal H$ and the duals of $\Phi$. But, it is not hard to check that $Id_{\ell^2}-U_\Phi S^{-1}_\Phi T_{\Phi}=P_{\ker(T_\Phi)}$.
It follows that $V=WP_{\ker(T_\Phi)}$ and therefore we have
\begin{align}\label{DOperator}
\varphi_n^{ad}={\mathcal A}^*S_\Phi^{-1}{\varphi}_n+(P_{\ker(T_\Phi)}W^*)^*\delta_n=\pi_n({\Phi}_{P_{\ker(T_\Phi)}W^*}^{ad}({\mathcal A})).
\end{align}
From Eqs. (\ref{DBessel}) and (\ref{DOperator}), we deduce that in Theorem \ref{best-app} above $\Phi_\Theta^{ad}({\mathcal A}_1)$ is the Bessel sequence corresponding to the approximately dual frame $\Psi_{\Theta_{ba}}^{ad}({\mathcal A}_2)$ of $\Psi$. That is, in terms of Bessel sequence we have the following explicit representation for the component of the sequence $\Psi_{\Theta_{ba}}^{ad}({\mathcal A}_2)$
$$\pi_n(\Psi_{\Theta_{ba}}^{ad}({\mathcal A}_2))={\mathcal A}_2^*S_\Psi^{-1}{\psi}_n+\pi_n(\Phi_\Theta^{ad}({\mathcal A}_1))-\sum_{j=1}^\infty\big<S_\Psi^{-1}{\psi}_n,
\psi_j\big>\pi_n(\Phi_\Theta^{ad}({\mathcal A}_1)).$$
\end{enumerate}
\end{remark}


Now we are in position to consider the effect of perturbations of Gabor systems on their alternate and approximately dual Gabor frames.

Our starting point is the investigation of the perturbation question on the generating function of a Gabor system. Its proof can be obtained via Proposition \ref{cad}, Theorem \ref{best-app} and
Eq. (\ref{Gelar0714}) combined with Theorem 22.4.1 of \cite{c} and Remark \ref{ex1} above. The details are omitted.

\begin{theorem}\label{Gabor1}
Let $g_1, g_2\in L^2({\Bbb R})$ and $a, b>0$ be given, and suppose that $\mathcal G$ is a frame for $L^2({\Bbb R})$. If
$${\rm r}:=\frac{1}{b}\sup_{x\in[0,a]}\sum_{k\in{\Bbb Z}}\left|\sum_{n\in{\Bbb Z}}(g_1-g_2)(x-na)\overline{(g_1-g_2)(x-na-k/b)}\right|<m_{{\mathcal G}_1},$$
then ${\mathcal G}_2$ is a frame for $L^2({\Bbb R})$ with lower frame bound $(\sqrt{m_{{\mathcal G}_1}}-\sqrt{{\rm r}})^2$ and it is a $\sqrt{{\rm r}}$-perturbation of ${\mathcal G}_1$ as well.
Moreover, if ${\mathcal A}_1$ and ${\mathcal A}_2$ are two operators in $B(L^2({\Bbb R}))$ which commutes with $E_{\pm b}$ and $T_{\pm a}$ and $\|Id_{\mathcal H}-{\mathcal A}_i\|_{\rm op}<1$ {\rm(}$i=1, 2${\rm)}, then we have
\begin{align*}
\sum_{n\in{\Bbb Z}}\left|({\mathcal A}_1^*S_{{\mathcal G}_1}^{-1}g_1-{\mathcal A}_2^*S_{{\mathcal G}_2}^{-1}g_2)(x-na)\right|^2\leq b\left[\frac{2\sqrt{{\rm r}}\|{\mathcal A}_1\|_{\rm op}}{\sqrt{m_{{\mathcal G}_1}}(\sqrt{m_{{\mathcal G}_1}}-\sqrt{{\rm r}})}+\frac{\|{\mathcal A}_1-{\mathcal A}_2\|_{\rm op}}{\sqrt{m_{{\mathcal G}_1}}-\sqrt{{\rm r}}}\right],
\end{align*}
for almost everywhere $x\in{\Bbb R}$, and for given approximately dual Gabor frames ${\mathcal G}_1^{ad}$ of $\mathcal G$ with generating function
$$g_1^{ad}={\mathcal A}_1^*S_{{\mathcal G}_1}^{-1}g_1+h-\sum_{m,n\in{\Bbb Z}}\big<S_{{\mathcal G}_1}^{-1}g_1,E_{mb}T_{na}g_1\big>E_{mb}T_{na}h,$$
the function
$$g_2^{ad}={\mathcal A}_2^*S_{{\mathcal G}_2}^{-1}g_2+g_1^{ad}-\sum_{m,n\in{\Bbb Z}}\big<S_{{\mathcal G}_2}^{-1}g_2,E_{mb}T_{na}g_2\big>E_{mb}T_{na}g_1^{ad}$$
generates the best approximation of ${\mathcal G}_1^{ad}$ in ${\mathcal AD}({\mathcal G}_2)$.
\end{theorem}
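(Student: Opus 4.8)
The plan is to recast the concrete Gabor statement as an instance of the abstract perturbation results of Section 4, and then translate back to generating functions. First I would identify the quantity ${\rm r}$ as an upper (Bessel) bound for the Gabor system generated by the difference $g_1-g_2$: by the sufficient condition behind Theorem 22.4.1 of \cite{c}, the system $(E_{mb}T_{na}(g_1-g_2))_{m,n}$ is Bessel with bound ${\rm r}$. Since the Gabor synthesis operator depends linearly on its generator, $T_{{\mathcal G}_1}-T_{{\mathcal G}_2}$ is exactly the synthesis operator of this difference system, and hence $\|T_{{\mathcal G}_1}-T_{{\mathcal G}_2}\|_{\rm op}\leq\sqrt{{\rm r}}$. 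Thus ${\mathcal G}_2$ is a $\sqrt{{\rm r}}$-perturbation of ${\mathcal G}_1$ with $\sqrt{{\rm r}}<\sqrt{m_{{\mathcal G}_1}}$, and part (3) of Proposition \ref{per-1200} immediately yields that ${\mathcal G}_2$ is a frame with lower bound $(\sqrt{m_{{\mathcal G}_1}}-\sqrt{{\rm r}})^2$.

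For the pointwise estimate I would first observe that, because each ${\mathcal A}_i$ commutes with $E_{\pm b}$ and $T_{\pm a}$, so do ${\mathcal A}_i^*$ and $S_{{\mathcal G}_i}^{-1}$; consequently the canonical approximate dual ${\mathcal G}_{i,0}^{ad}({\mathcal A}_i)$ is again a Gabor system, generated by ${\mathcal A}_i^*S_{{\mathcal G}_i}^{-1}g_i$. Its difference from the other is therefore the Gabor system generated by $\gamma:={\mathcal A}_1^*S_{{\mathcal G}_1}^{-1}g_1-{\mathcal A}_2^*S_{{\mathcal G}_2}^{-1}g_2$. Applying Proposition \ref{cad} with $\mu=\sqrt{{\rm r}}$ bounds $\|{\mathcal G}_{1,0}^{ad}({\mathcal A}_1)-{\mathcal G}_{2,0}^{ad}({\mathcal A}_2)\|_{{\frak Fr}}$, i.e. the operator norm of this difference synthesis operator, by the bracketed expression. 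Finally, Eq. (\ref{Gelar0714}) converts this operator-norm bound into the desired pointwise control: the optimal Bessel bound of the $\gamma$-generated Gabor system equals the square of its frame norm, so $\sum_{n}|\gamma(x-na)|^2\leq b\,M$ for almost every $x$, which together with the bound from Proposition \ref{cad} gives the displayed inequality.

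For the last assertion I would invoke Theorem \ref{best-app} with $\Phi={\mathcal G}_1$ and $\Psi={\mathcal G}_2$, which already identifies $\Psi_{\Theta_{ba}}^{ad}({\mathcal A}_2)$ as the best approximation of $\Phi_{\Theta}^{ad}({\mathcal A}_1)$ in ${\mathcal AD}({\mathcal G}_2)$. It then remains to check that this best approximation is generated by the stated function $g_2^{ad}$. This is a direct translation of the explicit component formula of Remark \ref{ex1}(2): taking the Bessel sequence $\Omega$ there to be ${\mathcal G}_1^{ad}$ (so that $W=T_{{\mathcal G}_1^{ad}}$), formula (\ref{DBessel}) for $\pi_n(\Psi_{\Theta_{ba}}^{ad}({\mathcal A}_2))$ becomes, in the Gabor language, precisely $g_2^{ad}={\mathcal A}_2^*S_{{\mathcal G}_2}^{-1}g_2+g_1^{ad}-\sum_{m,n}\langle S_{{\mathcal G}_2}^{-1}g_2,E_{mb}T_{na}g_2\rangle E_{mb}T_{na}g_1^{ad}$, where once more the commutation of ${\mathcal A}_2$ with the time-frequency shifts is what guarantees that the Gabor structure is preserved.

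The main obstacle I anticipate is not any single estimate but the careful bookkeeping of the translation between the operator-theoretic formulation and the generating-function formulation: one must verify throughout that the commutation hypotheses on ${\mathcal A}_1,{\mathcal A}_2$ keep every object a bona fide Gabor system, and that the abstract frame-norm bound of Proposition \ref{cad} transfers to the pointwise bound via (\ref{Gelar0714}) with the correct power of the bracket. The remaining steps are then routine applications of the already-established Section 4 results.
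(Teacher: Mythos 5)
Your proposal assembles exactly the ingredients the paper itself cites for this theorem (Proposition \ref{per-1200}(3) with Theorem 22.4.1 of \cite{c}, Proposition \ref{cad}, Eq. (\ref{Gelar0714}), Theorem \ref{best-app} and Remark \ref{ex1}(2)); since the paper omits all details, your argument is essentially its intended proof, and the translation steps (commutation of ${\mathcal A}_i$ and $S_{{\mathcal G}_i}^{-1}$ with the time--frequency shifts, identification of $\Theta_{ba}$ with the choice $W=T_{{\mathcal G}_1^{ad}}$) are handled correctly. The one caveat: because the optimal Bessel bound of the difference system is the \emph{square} of its synthesis-operator norm, feeding Proposition \ref{cad} into Eq. (\ref{Gelar0714}) yields $\sum_{n}|\gamma(x-na)|^2\leq b\,B^2$ with $B$ the bracketed quantity, not $b\,B$ as displayed, so your closing claim that this ``gives the displayed inequality'' is off by a square --- a discrepancy that appears to lie in the paper's statement rather than in your reasoning, but which you should flag explicitly rather than pass over.
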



The following three corollaries are now
immediate. The first ones state Theorem \ref{Gabor1} especially for the case of canonical approximately dual Gabor frames and the second ones study how perturbation in the Wiener space norm effects the alternate and approximately dual Gabor frames of original and perturbed generating function.

\begin{corollary}\label{Gabor2}
Let $g_1, g_2, a, b, {\rm r}, {\mathcal A}_1$ and ${\mathcal A}_2$ be as in Theorem \ref{Gabor1}. Then the function
$$g_2^{ad}={\mathcal A}_2^*S_{{\mathcal G}_2}^{-1}g_2+{\mathcal A}_1^*S_{{\mathcal G}_1}^{-1}\left(g_1^{ad}-\sum_{m,n\in{\Bbb Z}}\big<S_{{\mathcal G}_2}^{-1}g_2,E_{mb}T_{na}g_2\big>E_{mb}T_{na}g_1^{ad}\right)$$
generates the best approximation of the canonical approximately dual Gabor frame of ${\mathcal G}_1$ in ${\mathcal AD}({\mathcal G}_2)$ and
$$\sum_{n\in{\Bbb Z}}\left|({\mathcal A}_1^*S_{{\mathcal G}_1}^{-1}g_1^{ad}-g_2^{ad})(x-na)\right|\leq \frac{b\sqrt{{\rm r}}}{\sqrt{m_{{\mathcal G}_1}}-\sqrt{{\rm r}}}\left(\frac{\|{\mathcal A}_1\|_{\rm op}}{\sqrt{m_{{\mathcal G}_1}}}+\frac{\|{\mathcal A}_1-{\mathcal A}_2\|_{\rm op}}{\sqrt{{\rm r}}}\right)$$
for almost everywhere $x\in{\Bbb R}$.
\end{corollary}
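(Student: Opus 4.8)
The plan is to obtain Corollary~\ref{Gabor2} as the canonical specialisation (the case $\Theta=0$, equivalently $h=0$) of Theorem~\ref{Gabor1}, so that essentially no new computation is needed beyond simplifying the resulting expression. First I would recall that the canonical approximately dual Gabor frame of ${\mathcal G}_1$ is the one generated by ${\mathcal A}_1^*S_{{\mathcal G}_1}^{-1}g_1$; by Remark~\ref{ex1} this corresponds precisely to the choice $h=0$ in the representation (\ref{adgabor}), i.e. to $\Theta=0$ in Notation~\ref{not}. Under the hypotheses of Theorem~\ref{Gabor1}, ${\mathcal G}_2$ is a $\sqrt{{\rm r}}$-perturbation of ${\mathcal G}_1$ with lower bound $(\sqrt{m_{{\mathcal G}_1}}-\sqrt{{\rm r}})^2$, so the assumptions of Theorem~\ref{best-app} hold with $\mu=\sqrt{{\rm r}}$ and $\Theta=0$, and the best-approximation property in ${\mathcal AD}({\mathcal G}_2)$ is then inherited verbatim.

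For the explicit generator I would substitute $g_1^{ad}={\mathcal A}_1^*S_{{\mathcal G}_1}^{-1}g_1$ into the best-approximation formula of Theorem~\ref{Gabor1},
\begin{align*}
g_2^{ad}={\mathcal A}_2^*S_{{\mathcal G}_2}^{-1}g_2+g_1^{ad}-\sum_{m,n\in{\Bbb Z}}\big<S_{{\mathcal G}_2}^{-1}g_2,E_{mb}T_{na}g_2\big>E_{mb}T_{na}g_1^{ad},
\end{align*}
and then exploit the standing assumption that ${\mathcal A}_1$ (hence ${\mathcal A}_1^*$) and the frame operator $S_{{\mathcal G}_1}$ (hence $S_{{\mathcal G}_1}^{-1}$) commute with every $E_{mb}$ and $T_{na}$. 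Since then $E_{mb}T_{na}\big({\mathcal A}_1^*S_{{\mathcal G}_1}^{-1}g_1\big)={\mathcal A}_1^*S_{{\mathcal G}_1}^{-1}\big(E_{mb}T_{na}g_1\big)$, the operator ${\mathcal A}_1^*S_{{\mathcal G}_1}^{-1}$ factors out of the entire correction term, giving precisely the displayed form of $g_2^{ad}$ in the corollary.

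For the quantitative estimate I would apply Theorem~\ref{best-app} with $\Theta=0$ and $\mu=\sqrt{{\rm r}}$. This shows that the best approximation $\Psi_{\Theta_{ba}}^{ad}({\mathcal A}_2)$ (generated by $g_2^{ad}$) is a $\lambda$-perturbation of the canonical approximately dual $\Phi_{0}^{ad}({\mathcal A}_1)$ (generated by ${\mathcal A}_1^*S_{{\mathcal G}_1}^{-1}g_1$), with
$$\lambda=\frac{\sqrt{{\rm r}}}{\sqrt{m_{{\mathcal G}_1}}-\sqrt{{\rm r}}}\Big(\frac{\|{\mathcal A}_1\|_{\rm op}}{\sqrt{m_{{\mathcal G}_1}}}+\frac{\|{\mathcal A}_1-{\mathcal A}_2\|_{\rm op}}{\sqrt{{\rm r}}}\Big),$$
that is, the synthesis operator of the difference Gabor system obeys $\|\,\cdot\,\|_{{\frak Fr}}\le\lambda$. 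I would then convert this operator-norm bound into the pointwise bound of the corollary through (\ref{Gelar0714}): since $\|X\|_{{\frak Fr}}=\|T_X\|_{\rm op}$ controls the optimal Bessel bound of the difference Gabor system generated by ${\mathcal A}_1^*S_{{\mathcal G}_1}^{-1}g_1-g_2^{ad}$, the Gabor Bessel inequality (\ref{Gelar0714}) transfers the bound $\lambda$ to the stated control of $\sum_{n\in{\Bbb Z}}\big|({\mathcal A}_1^*S_{{\mathcal G}_1}^{-1}g_1-g_2^{ad})(x-na)\big|$ for almost every $x$, the factor $b$ appearing directly from (\ref{Gelar0714}).

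I expect the only point requiring genuine care to be the commutation/factorisation step: one must verify that ${\mathcal A}_1^*$ and $S_{{\mathcal G}_1}^{-1}$ truly commute with all $E_{mb}T_{na}$, which rests on the assumption that ${\mathcal A}_1$ commutes with $E_{\pm b},T_{\pm a}$ together with the corresponding commutation of the Gabor frame operator and its inverse. Only once this is secured is the factored expression for $g_2^{ad}$ legitimate and does $g_2^{ad}$ generate a bona fide Gabor system lying in ${\mathcal AD}({\mathcal G}_2)$; everything else is a direct transcription of Theorem~\ref{Gabor1}, Theorem~\ref{best-app} and (\ref{Gelar0714}).
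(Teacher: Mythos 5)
Your route is exactly the one the paper intends: the paper offers no separate proof, declaring the corollary ``immediate'' from Theorem \ref{Gabor1} specialized to the canonical approximately dual (i.e.\ $h=0$, so $\Theta=0$), combined with Theorem \ref{best-app} for $\mu=\sqrt{\rm r}$, the commutation of ${\mathcal A}_1^*$ and $S_{{\mathcal G}_1}^{-1}$ with the time--frequency shifts, and Eq.~(\ref{Gelar0714}). The one caveat---inherited from the paper's own statement rather than introduced by you---is that (\ref{Gelar0714}) controls the sum of \emph{squares} by $b$ times the Bessel bound, so an operator-norm bound $\lambda$ yields $\sum_{n}\left|(\cdot)(x-na)\right|^{2}\leq b\lambda^{2}$, not the first-power estimate displayed; your ``transfer'' step silently conflates the two.
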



Let us recall from \cite{c} that for given $a>0$ the Wiener space is defined by
$${W}:=\left\{g:{\Bbb R}\rightarrow{\Bbb C}|~g~ {\hbox{is~ measurable~ and~}} \|g\|_{W,a}:=\sum_{k\in{\Bbb Z}}\|g\chi_{[ka,(k+1)a)}\|_\infty<\infty\right\},$$
where $\chi_{[ka,(k+1)a)}$ denotes the characteristic function of $[ka,(k+1)a)$ on $\Bbb R$. This space equipped with the norm $\|\cdot\|_{W,a}$ becomes a Banach space.
It is notable to mention that the space $W$ is independent of the choice of $a$ and different choices give equivalent norms. It is shown in \cite[Corollary 22.4.2]{c} that if $g_1, g_2\in L^2({\Bbb R})$ and $a, b>0$ are such that ${\mathcal G}_1$ is a frame for $L^2({\Bbb R})$ and $\|g_1-g_2\|_{W,a}<\sqrt{\frac{bm_{{\mathcal G}_1}}{2}}$, then ${\rm r}\leq\frac{2}{b}\|g_1-g_2\|_{W,a}$. Hence, the following next result follows from Theorem \ref{Gabor1} with ${\rm r}$ replaced by $\frac{2}{b}\|g_1-g_2\|_{W,a}$.

\begin{corollary}\label{Gabor3}
Let $g_1, g_2\in L^2({\Bbb R})$ and $a, b>0$ be given, and suppose that $\mathcal G$ is a frame for $L^2({\Bbb R})$. If
$\|g_1-g_2\|_{W,a}<\sqrt{\frac{bm_{{\mathcal G}_1}}{2}}$, then ${\mathcal G}_2$ is a frame for $L^2({\Bbb R})$ with lower frame bound $\left(\sqrt{m_{{\mathcal G}_1}}-\sqrt{\frac{2}{b}}\;\|g_1-g_2\|_{W,a}\right)^2$ and it is a $\sqrt{\frac{2}{b}}\|g_1-g_2\|_{W,a}$-perturbation of ${\mathcal G}_1$ as well.
Moreover, if ${\mathcal A}_1$ and ${\mathcal A}_2$ are two operators in $B(L^2({\Bbb R}))$ which commutes with $E_{\pm b}$ and $T_{\pm a}$ and $\|Id_{\mathcal H}-{\mathcal A}_i\|_{\rm op}<1$ {\rm(}$i=1, 2${\rm)}, then we have
\begin{align*}
\sum_{n\in{\Bbb Z}}\left|({\mathcal A}_1^*S_{{\mathcal G}_1}^{-1}g_1-{\mathcal A}_2^*S_{{\mathcal G}_2}^{-1}g_2)(x-na)\right|^2&\leq \frac{2\sqrt{2b}\|g_1-g_2\|_{W,a}\|{\mathcal A}_1\|_{\rm op}}{\sqrt{m_{{\mathcal G}_1}}(\sqrt{m_{{\mathcal G}_1}}-\sqrt{\frac{2}{b}}\|g_1-g_2\|_{W,a})}\\&+\frac{b\|{\mathcal A}_1-{\mathcal A}_2\|_{\rm op}}{\sqrt{m_{{\mathcal G}_1}}-\sqrt{\frac{2}{b}}\|g_1-g_2\|_{W,a}},
\end{align*}
for almost everywhere $x\in{\Bbb R}$, and for given approximately dual Gabor frames ${\mathcal G}_1^{ad}$ of $\mathcal G$ with generating function
$$g_1^{ad}={\mathcal A}_1^*S_{{\mathcal G}_1}^{-1}g_1+h-\sum_{m,n\in{\Bbb Z}}\big<S_{{\mathcal G}_1}^{-1}g_1,E_{mb}T_{na}g_1\big>E_{mb}T_{na}h,$$
the function
$$g_2^{ad}={\mathcal A}_2^*S_{{\mathcal G}_2}^{-1}g_2+g_1^{ad}-\sum_{m,n\in{\Bbb Z}}\big<S_{{\mathcal G}_2}^{-1}g_2,E_{mb}T_{na}g_2\big>E_{mb}T_{na}g_1^{ad}$$
generates the best approximation of ${\mathcal G}_1^{ad}$ in ${\mathcal AD}({\mathcal G}_2)$.
\end{corollary}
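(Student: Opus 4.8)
The plan is to deduce this statement directly from Theorem \ref{Gabor1} by controlling the quantity ${\rm r}$ through the Wiener-space norm, exactly as signalled in the paragraph preceding the corollary. First I would invoke \cite[Corollary 22.4.2]{c}: under the standing hypothesis $\|g_1-g_2\|_{W,a}<\sqrt{\frac{bm_{{\mathcal G}_1}}{2}}$, the number ${\rm r}$ appearing in Theorem \ref{Gabor1} satisfies $\sqrt{{\rm r}}\leq\sqrt{\frac{2}{b}}\,\|g_1-g_2\|_{W,a}$. The role of the norm restriction is precisely that it forces ${\rm r}<m_{{\mathcal G}_1}$, so that the hypothesis ${\rm r}<m_{{\mathcal G}_1}$ of Theorem \ref{Gabor1} is satisfied and the full conclusion of that theorem becomes available for the pair $({\mathcal G}_1,{\mathcal G}_2)$.

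With this in hand, every conclusion of the corollary is obtained from the corresponding conclusion of Theorem \ref{Gabor1} by a single monotonicity substitution, replacing $\sqrt{{\rm r}}$ by its upper bound $\sqrt{\frac{2}{b}}\,\|g_1-g_2\|_{W,a}$. For the lower frame bound I would use that $t\mapsto(\sqrt{m_{{\mathcal G}_1}}-t)^2$ is decreasing on $[0,\sqrt{m_{{\mathcal G}_1}})$, so the value $(\sqrt{m_{{\mathcal G}_1}}-\sqrt{{\rm r}})^2$ furnished by Theorem \ref{Gabor1} dominates the stated bound $\big(\sqrt{m_{{\mathcal G}_1}}-\sqrt{\frac{2}{b}}\|g_1-g_2\|_{W,a}\big)^2$, which is therefore also a legitimate lower frame bound. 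For the perturbation claim, a $\sqrt{{\rm r}}$-perturbation is a fortiori a $\sqrt{\frac{2}{b}}\|g_1-g_2\|_{W,a}$-perturbation, since the latter parameter is larger.

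The estimate on the Gabor coefficients requires the same substitution, together with the algebraic simplification $b\cdot 2\sqrt{\frac{2}{b}}=2\sqrt{2b}$. The right-hand side of the inequality in Theorem \ref{Gabor1} is increasing in $\sqrt{{\rm r}}$, since its numerator grows and its denominator $\sqrt{m_{{\mathcal G}_1}}-\sqrt{{\rm r}}$ shrinks as $\sqrt{{\rm r}}$ increases; hence replacing $\sqrt{{\rm r}}$ by its upper bound preserves the inequality and yields exactly the displayed bound of the corollary. Finally, the generating function $g_2^{ad}$ and the assertion that it produces the best approximation of ${\mathcal G}_1^{ad}$ in ${\mathcal AD}({\mathcal G}_2)$ are literally the same as in Theorem \ref{Gabor1}, so this part transfers verbatim.

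The only genuine point to watch is the bookkeeping on the direction of each inequality under the substitution $\sqrt{{\rm r}}\mapsto\sqrt{\frac{2}{b}}\|g_1-g_2\|_{W,a}$: one must verify that each quantity (lower bound, perturbation parameter, coefficient estimate) is monotone in $\sqrt{{\rm r}}$ in the direction that keeps the replacement an over-estimate. Once the hypothesis ${\rm r}<m_{{\mathcal G}_1}$ is secured from the Wiener-norm restriction, no further analytic difficulty arises, and the result is a specialization of Theorem \ref{Gabor1}.
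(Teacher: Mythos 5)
Your proposal is correct and takes essentially the same route as the paper: the corollary is deduced from Theorem \ref{Gabor1} by invoking \cite[Corollary 22.4.2]{c} to bound $\sqrt{{\rm r}}$ by $\sqrt{2/b}\,\|g_1-g_2\|_{W,a}$, checking that the Wiener-norm hypothesis secures ${\rm r}<m_{{\mathcal G}_1}$, and then substituting monotonically in each conclusion. (Your form of the cited bound, ${\rm r}\leq\tfrac{2}{b}\|g_1-g_2\|_{W,a}^{2}$, is the correct one; the paper's prose omits the square but uses the same substitution you do.)
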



In the case of canonical approximately dual Gabor frames, Corollary \ref{Gabor3} reduces to the next result.

\begin{corollary}\label{Gabor4}
Let $g_1, g_2, a, b, {\mathcal A}_1$ and ${\mathcal A}_2$ be as in Corollary \ref{Gabor3}. Then the function
$$g_2^{ad}={\mathcal A}_2^*S_{{\mathcal G}_2}^{-1}g_2+{\mathcal A}_1^*S_{{\mathcal G}_1}^{-1}\left(g_1^{ad}-\sum_{m,n\in{\Bbb Z}}\big<S_{{\mathcal G}_2}^{-1}g_2,E_{mb}T_{na}g_2\big>E_{mb}T_{na}g_1^{ad}\right)$$
generates the best approximation of the canonical approximately dual Gabor frame of ${\mathcal G}_1$ in ${\mathcal AD}({\mathcal G}_2)$ and
$$\sum_{n\in{\Bbb Z}}\left|({\mathcal A}_1^*S_{{\mathcal G}_1}^{-1}g_1^{ad}-g_2^{ad})(x-na)\right|\leq \frac{b\sqrt{{2}}\|g_1-g_2\|_{W,a}}{\sqrt{bm_{{\mathcal G}_1}}-\sqrt{2}\|g_1-g_2\|_{W,a}}\left(\frac{\|{\mathcal A}_1\|_{\rm op}}{\sqrt{m_{{\mathcal G}_1}}}+\frac{\sqrt{b}\|{\mathcal A}_1-{\mathcal A}_2\|_{\rm op}}{\sqrt{2}\|g_1-g_2\|_{W,a}}\right)$$
for almost everywhere $x\in{\Bbb R}$.
\end{corollary}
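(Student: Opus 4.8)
The plan is to derive Corollary \ref{Gabor4} as the canonical-approximately-dual specialization of Corollary \ref{Gabor3}, or equivalently as the Wiener-space reformulation of Corollary \ref{Gabor2}. The decisive observation is that the displayed formula for $g_2^{ad}$ here coincides verbatim with the one in Corollary \ref{Gabor2}, and so does the left-hand side $\sum_{n\in{\Bbb Z}}|({\mathcal A}_1^*S_{{\mathcal G}_1}^{-1}g_1^{ad}-g_2^{ad})(x-na)|$ of the estimate; only the right-hand bound must be recast in terms of $\|g_1-g_2\|_{W,a}$. Thus I would first confirm that the hypotheses of Corollary \ref{Gabor2} are in force and then substitute the Wiener estimate for ${\rm r}$.

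First I would invoke \cite[Corollary 22.4.2]{c}: the standing assumption $\|g_1-g_2\|_{W,a}<\sqrt{bm_{{\mathcal G}_1}/2}$ yields ${\rm r}\leq\frac{2}{b}\|g_1-g_2\|_{W,a}^2$, and the very same bound gives ${\rm r}<m_{{\mathcal G}_1}$, which is exactly the frame condition required in Theorem \ref{Gabor1} and Corollary \ref{Gabor2}. Consequently Corollary \ref{Gabor2} applies and furnishes, with no additional work, the frame property of ${\mathcal G}_2$, the explicit generator $g_2^{ad}$, the fact that $g_2^{ad}$ generates the best approximation of the canonical approximately dual Gabor frame of ${\mathcal G}_1$ in ${\mathcal AD}({\mathcal G}_2)$, and the pointwise estimate written in terms of ${\rm r}$.

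It then remains only to translate that estimate. Expanding the right-hand side of Corollary \ref{Gabor2} as $\frac{b\sqrt{{\rm r}}\,\|{\mathcal A}_1\|_{\rm op}}{(\sqrt{m_{{\mathcal G}_1}}-\sqrt{{\rm r}})\sqrt{m_{{\mathcal G}_1}}}+\frac{b\,\|{\mathcal A}_1-{\mathcal A}_2\|_{\rm op}}{\sqrt{m_{{\mathcal G}_1}}-\sqrt{{\rm r}}}$, one sees that both summands are increasing in ${\rm r}$ on $(0,m_{{\mathcal G}_1})$; in particular the factor $1/\sqrt{{\rm r}}$ cancels in the second summand, so no decreasing contribution survives. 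Hence replacing ${\rm r}$ by its upper bound $\frac{2}{b}\|g_1-g_2\|_{W,a}^2$ preserves the inequality. A short simplification---multiplying numerator and denominator of the outer fraction by $\sqrt{b}$ and rewriting $\sqrt{{\rm r}}=\sqrt{2/b}\,\|g_1-g_2\|_{W,a}$ inside the parentheses---then reproduces exactly the bound claimed in the statement.

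The only step that is not purely mechanical is the justification for substituting the upper bound of ${\rm r}$: since $\sqrt{{\rm r}}$ sits in the denominator $\sqrt{m_{{\mathcal G}_1}}-\sqrt{{\rm r}}$ and, before cancellation, also inside the parentheses, the monotonicity in ${\rm r}$ is not evident at first sight. Once the expression is expanded as above it becomes transparent, so I expect this to be the main---yet still routine---obstacle; everything else is bookkeeping inherited directly from Corollary \ref{Gabor2}.
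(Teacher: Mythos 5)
Your proposal is correct and follows essentially the same route as the paper, which itself presents Corollary \ref{Gabor4} as an immediate specialization obtained by substituting the Wiener-norm bound ${\rm r}\leq\frac{2}{b}\|g_1-g_2\|_{W,a}^2$ (from \cite[Corollary 22.4.2]{c}) into Corollary \ref{Gabor2}/\ref{Gabor3}; your monotonicity check justifying the substitution of the upper bound for $\sqrt{{\rm r}}$, and the algebra recovering the stated constants, are both sound. (You are also right to read the paper's inline ``${\rm r}\leq\frac{2}{b}\|g_1-g_2\|_{W,a}$'' as a typo for the squared norm, since that is what the stated frame bounds require.)
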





\end{document}